\newcommand{\pl}[1]{\foreignlanguage{polish}{#1}}
\newcommand{\fr}[1]{\foreignlanguage{french}{#1}}
\newcommand{\CC}{\mathbb{C}}
\newcommand{\RR}{\mathbb{R}}
\newcommand{\ZZ}{\mathbb{Z}}
\renewcommand{\Re}{\operatorname{Re}}
\renewcommand{\Im}{\operatorname{Im}}
\newcommand{\norm}[1]{\lvert {#1} \rvert}
\newcommand{\sprod}[2]{\langle {#1}, {#2} \rangle}
\renewcommand{\atop}[2]{\stackrel{{#1}}{{#2}}}
\definecolor{myred}{rgb}{0.77, 0.12, 0.23}
\definecolor{myblue}{rgb}{0.0, 0.45, 0.73}
\definecolor{mygreen}{rgb}{0.0, 0.56, 0.0}
\newcommand\closedchamber{\smash{\overline{\mathfrak{a}_+}}}
\renewcommand\epsilon{\varepsilon}
\newcommand\Id{\operatorname{Id}}
\renewcommand\Im{\operatorname{Im}}
\newcommand\kappamax{\kappa_{\ssf\max}}
\newcommand\kappamin{\kappa_{\ssf\min}}
\renewcommand\Re{\operatorname{Re}}
\newcommand\rme{\mathrm{e}}
\newcommand\rmi{\mathrm{i}}
\newcommand\ssf{\hspace{.3mm}}
\newcommand\ssb{\hspace{-.3mm}}
\newcommand\vsf{\hspace{.1mm}}
\theoremstyle{plain}
\newcounter{thm}
\newtheorem{theorem}{Theorem}
\newtheorem{corollary}[theorem]{Corollary}
\newtheorem{lemma}[theorem]{Lemma}
\newtheorem{remark}[theorem]{Remark}
\numberwithin{equation}{section}
\title{Optimal bounds for the Dunkl kernel in the dihedral case}
\author{Jean-Philippe Anker}
\address{
	\fr{
	Jean-Philippe Anker\\
	Institut Denis Poisson (UMR 7013),
	Universit\'e d'Orl\'eans,
	Universit\'e de Tours \& CNRS,
	B\^atiment de Math\'ematiques,
	B.P.~6759, 45067 Orl\'eans cedex 2, 
	France
	}
}
\email{anker@univ-orleans.fr}
\author{Bartosz Trojan}
\address{
	\pl{
	Bartosz Trojan\\
    Wydzia\l{} Matematyki\\
	Politechnika Wroc\l{}awska\\
	Wyb.~Wyspia\'{n}skiego 27\\
	50-370 Wroc\l{}aw\\
	Poland}
}
\email{bartosz.trojan@gmail.com}
\thanks{The second author acknowledges financial support from CNRS for a research trimester in 2023 in Orl\'eans.}
\begin{document}

\maketitle

\begin{abstract}
We establish sharp upper and lower estimates of the Dunkl kernel in the case of dihedral groups.
\end{abstract}

The heat kernel plays a central role in several branches of mathematics, e.g. in (harmonic) analysis and PDE, in Riemannian 
(and sub-Riemannian) geometry or in probability theory. In many applications it is desired to know sharp upper and lower
estimates in the largest possible space-time regime, see e.g. \cite{MR2569498} for a comprehensive presentation.
For Riemannian symmetric spaces of non-compact  type, the global behavior of the heat kernel was determined in 
\cite{AnkerJi1999} and \cite{AnkerOstellari2004}. In the present paper we are interested in the heat kernel arising in 
the rational Dunkl theory, which generalizes spherical Fourier analysis on Riemannian symmetric spaces of Euclidean type.
In this theory we deal with a finite root system $R$ in a Euclidean space $\mathfrak{a}$. The heat kernel is the fundamental
solution of the differential-difference equation
\[
	\partial_t h(t; x, y) = \Delta_x h(t; x, y) + 
	\sum_{\alpha \in R^+} \kappa(\alpha) \biggl( 2 \frac{\sprod{\alpha}{\nabla_x h(t; x, y)}}{\sprod{\alpha}{x}} + 
	|\alpha|^2 \frac{h(t; s_\alpha . x, y) - h(t; x, y)} {\sprod{\alpha}{x}^2} \bigg)  
\]
where $\Delta$ denotes the Laplacian on the underlying Euclidean space, $R_+$ the collection of positive roots, $\kappa$ 
the multiplicity function and $s_\alpha$ the orthogonal reflection with respect to $\alpha^\perp$.
The one dimensional case was carefully investigated in \cite{AnkerBensalemDziubanskiHamda2014}.
The general case has been recently studied in \cite{DziubanskiHejna2023} where the authors obtained the following estimates:
there is an explicit rational function $Q(t; x, y)$ and there are constants
$C_1, C_2 > 0$ and $c_1 > c_2 > 0$ such that
\begin{equation}\label{eq:0:1}
C_1 Q(t; x, y) g^{-c_1 \frac{d(x, y)^2}{t}} \leqslant h(t; x, y) \leqslant C_2 Q(t; x, y) e^{-c_2 \frac{d(x, y)^2}{t}}
\end{equation}
where
\[
d(x, y) = \min\big\{\norm{x - w . y} : w \in W\big\}
\]
denotes the orbital distance under the Weyl group action.

However, since the constants $c_1$ and $c_2$ are different, the estimates \eqref{eq:0:1} are \emph{not optimal}. 
Let us recall the expression
\[
h(t; x, y) = \frac{1}{c_k(2t)^{\gamma + \frac{N}{2}}} \, \exp\bigg\{-\frac{|x|^2 + |y|^2}{4t}\bigg\} \,
E\bigg(\frac{x}{\sqrt{2t}}, \frac{y}{\sqrt{2t}}\bigg)
\]
of the heat kernel in terms of the Dunkl kernel $E$, see \cite{Rosler1998}. The latter is an eigenfunction of all Dunkl 
operators, which are first order differential-difference operators.
In this paper we establish optimal estimates of $E$ (which imply optimal estimates of $h_t$) in the dihedral case $I_n$.

Our paper is organized as follows. The main result is stated in Theorem \ref{thm:1}. Its proof is carried out in Section \ref{sec:3}.
The overall strategy is explained in Section \ref{OverallStrategy} and the basic notation recalled in Section \ref{sec:1}.

\subsection*{Statement}
This work started as a joint project with J. Dziuba\'nski
which aimed at understanding the behavior of the heat kernel in the rational Dunkl setting
beyond the one dimensional case considered in \cite{AnkerBensalemDziubanskiHamda2014}.
In 2017 we obtained an upper bound of the Dunkl kernel for $A_2$ which was announced by J. Dziuba\'nski during his talk
at the conference ``Analysis and Applications'' organized in honor of E.M. Stein in September 2017 in Wroc{\l}aw
({\tt https://math.uni.wroc.pl/analysis2017}).
Later on, J. Dziuba\'nski and A. Hejna followed another approach
and obtained sharp upper and lower estimates for the heat kernel 
which, although not optimal, were sufficient for their needs (see \cite{DziubanskiHejna2023}).
Meanwhile we realized that our upper bound for $A_2$ was also a lower bound.
At the same time we obtained similar results for $B_2$.
Finally in June 2023, P. Graczyk and P. Sawyer informed us that
they were obtaining an upper and lower bound for $A_2$
by a completely different method, relying on a positive integral formula
$($see \cite{GraczykSawyer2023}$)$.

\section{Elements of rational Dunkl theory}
\label{sec:1}
In this section we introduce the necessary notation to define Dunkl kernels. For more details we refer to the pioneer paper 
\cite{Dunkl1989}, see also the surveys \cite{Rosler2003, Rosler2008}.

Let $R$ be a reduced (not necessarily crystallographic) finite root system in a $r$-dimensional Euclidean space $\mathfrak{a}$, 
that is, for each $\alpha \in R$, $s_\alpha(R) = R$ and $R \cap \RR \alpha = \{-\alpha, \alpha\}$, where 
\[
	s_\alpha (x) = x - \sprod{x}{\alpha\spcheck}\alpha
\]
and
\[
	\alpha\spcheck = \frac{2}{\sprod{\alpha}{\alpha}} \alpha.
\]
We fix a basis $\{\alpha_1, \alpha_2, \ldots, \alpha_r\}$ of simple roots in $R$. The corresponding subset of positive roots in $R$ will be
denoted by $R^+$. Let $W$ be the finite reflection group generated by $\{s_\alpha : \alpha \in R^+\}$. 
The fundamental domain for the action of $W$ on $\mathfrak{a}$ is the sector
\[
\mathfrak{a}_+ = \big\{x \in \mathfrak{a} : \sprod{\alpha}{x} > 0 \text{ for all } \alpha \in R^+ \big\}.
\]
Let $\kappa: R\rightarrow(0,\infty)$ be a positive multiplicity function on $R$, that is $\kappa$ is invariant under the action
of $W$ on $R$. The Dunkl operators, resp.~the Dunkl kernel $E(x,y)$ are $\kappa$-deformations of directional derivatives, 
resp.~of the exponential function $e^{\sprod{x}{y}}$. More precisely, the Dunkl operators are defined by
\begin{equation*}
T_\xi f(x) = \sprod{\xi}{\nabla f(x)} 
+ \sum_{\alpha \in R^+} \kappa(\alpha) \, \sprod{\alpha}{\xi} \, \frac{f(x) - f(s_\alpha x)}{\sprod{\alpha}{x}}
\quad \text{for every \,} \xi \in \mathfrak{a},
\end{equation*}
they commute pairwise and, for every $y\in\mathfrak{a}$, $E(\cdot,y)$ is the unique smooth solution to
\begin{equation}\label{eq:1}\begin{cases}
\,T_\xi f = \sprod{\xi}{y} f
\quad\text{for all } \xi\in\mathfrak{a},\\
\,f(0)=1.
\end{cases}\end{equation}
The Dunl kernel $E$ extends to a holomorphic function on $\mathfrak{a}_\CC \times \mathfrak{a}_\CC$, which satisfies
\begin{itemize}
	\item $E(x, y) = E(y, x)$;
	\item $E(\lambda x, y) = E(x, \lambda y)$, for all $\lambda \in \CC$;
	\item $E(w. x, w.y) = E(x, y)$, for all $w \in W$;
	\item $\overline{E(x, y)} = E(\overline{x}, \overline{y})$.
\end{itemize}
Moreover $E(x,y)>0$ when $x,y\in\mathfrak{a}$.
Our aim is to obtain sharp upper and lower estimates for $E$ on $\mathfrak{a} \times \mathfrak{a}$.

\section{The strategy}
\label{OverallStrategy}

In order to estimate the Dunkl kernel $E$, our strategy consists in using the differential-difference equations satisfied by $E$ and in constructing appropriate barrier functions. As an illustration, let us first consider the one dimensional case.

\subsection{Example\,: the one dimensional case}
Let us recall that $E: \RR \times \RR \rightarrow \CC$ satisfies
\[
	\left\{
		\begin{aligned}
		&\frac\partial{\partial x} E(x,y)
		+\frac \kappa x \big\{E(x,y)- E(x,- y)\big\}
		=y E(x,y)\\
		&\frac\partial{\partial x} E(x, -y)
		+\frac \kappa x \big\{E(x,- y)- E(x,y)\big\}
		=-y E(x, -y)
		\end{aligned}
		\quad\text{ for all } x, y \geqslant 0,
	\right.
\]
with the initial condition $E(x,y)=1$ whenever $x y = 0$. Given $c > 0$, we define
\begin{align*}
	\tilde{E}(x,y) &= (1 + c x y)^\kappa e^{-xy}E(x,y),\\
	\tilde{E}(x,-y) &= (1+c x y)^{\kappa+1} e^{-x y} E(x,-y),
\end{align*}
for all $x, y \geq 0$. Then
\begin{align}
	\label{eq:4}
	\frac\partial{\partial x} \tilde{E}(x,y)
	&=
	\frac \kappa {x (1+ c xy)}
	\{\tilde{E}(x, -y) - \tilde{E}(x,y)\},\\
	\label{eq:5}
	\frac\partial{\partial x} \tilde{E}(x, -y)
	&=\frac{\kappa (1+cxy)}x \{\tilde{E}(x,y)-\tilde{E}(x,-y)\}+\frac{(2\kappa+1)c+(\kappa c-2) c x y-2}
	{1+ c x y}y\tilde{E}(x,-y).
\end{align}
Let us fix $y > 0$. 

\vspace{1ex}
\noindent
\underline{Upper bound:}
Let $c$ be small, say $0<c<\frac2{2 \kappa+1}$. Hence,
\begin{equation}
	\label{eq:6}
	\tfrac{(2 \kappa+1)c+(\kappa c-2) c x y-2} {1+ c x y} <0.
\end{equation}
We claim that the function
\[
M(x)=\max\big\{\tilde{E}(x,y), \tilde{E}(x,-y)\big\}
\]
is a decreasing function on $[0,\infty)$. Indeed, on any interval $[a, b]$ where $\tilde{E}(x,y)\geqslant\tilde{E}(x,-y)$,
by \eqref{eq:4} we have $\tfrac\partial{\partial x}\tilde{E}(x,y)\leqslant0$. Hence
\[
\tilde{E}(x,-y)\leqslant\tilde{E}(x,y)\leqslant \tilde{E}(a,y)=M(a).
\]
Similarly, on any interval $[a, b]$ where $\tilde{E}(x,y) \leqslant \tilde{E}(x, -y)$, by \eqref{eq:6} and \eqref{eq:5}
we have $\tfrac\partial{\partial x} \tilde{E}(x,-y) \leqslant 0$. Hence
\[
	\tilde{E}(x,y) \leqslant \tilde{E}(x,- y) \leqslant \tilde{E}(a,- y)=M(a).
\]
In both cases, $M(x) \leqslant M(a)$ on $[a, b]$. Therefore, $M(x) \leqslant M(0) = 1$ on $[0,\infty)$.

\vspace{1ex}
\noindent
\underline{Lower bound:}
We argue similarly, assuming that $c$ is large, say $c > \tfrac{2}{\kappa}$. Hence,
\begin{equation}
	\label{eq:7}
	\tfrac{(2\kappa +1)c+(\kappa c-2)cxy-2}{1+ c xy}>0.
\end{equation}
The function
\[
	m(x)=\min\big\{\tilde{E}(x,y), \tilde{E}(x,-y) \big\}
\]
is an increasing function on $[0,\infty)$. Indeed, on any interval $[a, b]$ where $\tilde{E}(x,y) \leqslant \tilde{E}(x,-y)$,
by \eqref{eq:4} we have $\tfrac\partial{\partial x} \tilde{E}(x,y) \geqslant 0$. Hence
\[
	\tilde{E}(x,- y) \geqslant \tilde{E}(x,y) \geqslant \tilde{E}(a,y)=m(a).
\]
Similarly, on any interval $[a, b]$ where $\tilde{E}(x,y) \geqslant \tilde{E}(x,-y)$, by \eqref{eq:7} and \eqref{eq:5}
we have $\tfrac\partial{\partial x}\ssf\tilde{E}(x,-y) \geqslant 0$. Therefore,
\[
	\tilde{E}(x,y) \geqslant \tilde{E}(x, -y) \geqslant \tilde{E}(a, -y)=m(a).
\]
In both cases, $m(x) \geqslant m(a)$ on $[a, b]$. Hence $m(x) \geqslant m(0) = 1$ on $[0,\infty)$.

In conclusion we obtain the following global bound\,:
\[
	E(x,y) \approx
	\frac{e^{x y}}{(1+|x y|)^{\kappa}}
	\times
	\begin{cases}
		\,1 
		&\text{if $x y \geqslant 0$,}\\
		\,(1+|x y|)^{-1}
		&\text{if $x y \leqslant 0$.}\\
	\end{cases}
\]

\subsection{General root system}
\label{sec:2}
We set
\[
	\tilde{E}_w(x, y)
	=E(x, w.y) e^{-\langle x, y \rangle} 
	Q_w(x,y)
	\prod_{\alpha\in R^+} \big(1 + c \sprod{\alpha}{x} \sprod{\alpha}{y}\big)^{\kappa(\alpha)}
	\quad \text{for every \,} w\in W \text{ and }x, y \in \overline{\mathfrak{a}_+},
\]
where $c$ is a positive constant and the $Q_w$'s are barrier functions, to be determined, which are positive rational functions on $\overline{\mathfrak{a}_+}\times\overline{\mathfrak{a}_+}$.
Taking $\xi = x$ in \eqref{eq:1}, we get
\begin{align*}
	\sprod{x}{\nabla_x} 
	E(x, w. y) + \sum_{\alpha \in R^+} \kappa(\alpha)
	\big\{ E(x, w.y)- E(r_{\alpha}.x, w. y) \big\} 
	=\sprod{x}{w.y} E(x, w. y),
\end{align*}
thus
\begin{align*}	
	\frac{\sprod{x}{\nabla_x} \tilde{E}_w(x,y)} {\tilde{E}_w(x,y)}
	&=\sprod{x}{w. y} - \sprod{x}{y}
	+\frac{\sprod{x}{\nabla_x} Q_w(x,y)}{Q_w(x,y)}
	+\sum_{\alpha \in R^+} \kappa(\alpha)
	\frac{c \sprod{\alpha}{x} \sprod{\alpha}{y}}{1 + c \sprod{\alpha}{x} \sprod{\alpha}{y}} \\
	&\phantom{=}- \sum_{\alpha \in R^+} \kappa(\alpha)
	\bigg\{1 - \frac{Q_w(x,y)}{Q_{s_\alpha w}(x,y)}
	\frac{\tilde{E}_{s_\alpha w}(x,y)}{\tilde{E}_{w}(x,y)} \bigg\}.
\end{align*}
Hence
\begin{align*}
	\sprod{x}{\nabla_x} \tilde{E}_w(x, y)
	&=
	-\sum_{\alpha \in R^+} \kappa(\alpha)
	\frac{Q_w(x,y)}{Q_{s_\alpha w}(x,y)} \big\{\tilde{E}_w(x,y) - \tilde{E}_{s_\alpha w}(x,y) \big\}\\
	&\phantom{=}-\Lambda_w(x,y) \tilde{E}_{w}(x,y),
\end{align*}
where
\begin{equation}
	\label{Lambda_w}
	\begin{aligned}
	\Lambda_w(x,y)
	&=\sprod{x}{y - w. y} - \frac{\sprod{x}{\nabla_x} Q_w(x,y)} {Q_w(x,y)} \\
	&\phantom{=}
	+\sum_{\alpha \in R^+} \kappa(\alpha)
	\bigg\{\frac{1}{1 + c \sprod{\alpha}{x} \sprod{\alpha}{y}} 
	-\frac{Q_w(x,y)}{Q_{s_\alpha w}(x,y)} \bigg\}.
	\end{aligned}
\end{equation}
Our aim is to find positive rational functions $Q_w(x, y)$ and constants $c_+ \geqslant c_- > 0$ such that
\begin{equation}
	\label{eq:2}
	\Lambda_w(x, y) \geqslant 0, \quad \text{ for all } x, y \in \overline{\mathfrak{a}_+} \text{ and } c \in (0, c_-),
\end{equation}
and
\begin{equation}
	\label{eq:3}
	\Lambda_w(x, y) \leqslant 0, \quad \text{ for all } x, y \in \mathfrak{a}_+ \text{ and } c \in (c_+, \infty).
\end{equation}
Once \eqref{eq:2} and \eqref{eq:3} are achieved, we deduce as in the one dimensional case that
\begin{itemize}
\item
$M(t)=\max_{w\in W}\tilde{E}_w(tx,y)$ is a decreasing function of $t\in[0,\infty)$ when $c\in(0,c_-)$,
\item
$m(t)=\min_{w\in W}\tilde{E}_w(tx,y)$ is an increasing function of $t\in[0,\infty)$ when $c\in(c_+,\infty)$,
\end{itemize}
and we conclude that
\[
M(1)\leqslant M(0)=1=m(0)\leqslant m(1)\,.
\]

Currently we are able to complete this program for dihedral root systems, which are all (non necessarily crystallographic)
$2$-dimensional irreducible root systems. For general root systems we intend to return to the problem in the future.

\section{The Dunkl kernel for dihedral root systems}
\label{sec:3}
In this section we establish optimal bounds for the Dunkl kernel in the dihedral case, which includes in particular the root system $A_2$ considered in \cite{GraczykSawyer2023}.

\subsection{Dihedral root systems}
\label{sec:4}
Let us start by introducing the necessary notation. Let $I_n$, $n \geqslant 3$, be the root system in $\mathfrak{a} = \RR^2$
consisting of vectors
\[
	\Big\{ \pm\big(\cos \tfrac{\pi j}n,\sin\tfrac{\pi j}n\big) : j \in \{0, 1, \ldots, n-1\}\Big\}.
\]
Let us observe that the ends of the vectors in $I_n$ represent the vertices of the regular $(2n)$-gon in $\RR^2$. We 
set
\[
	\alpha_j = \big(\cos \tfrac{\pi j}n,\sin\tfrac{\pi j}n\big), \quad \text{ for } j \in \{0, 1, \ldots, n-1\}.
\]
Then $\{\alpha_0, \alpha_1, \ldots, \alpha_{n-1}\}$ is the set of positive roots in $I_n$. The positive Weyl chamber is
\[
	\mathfrak{a}_+ = \big\{ x \in \RR^2 : \sprod{\alpha_0}{x} > 0 \text{ and } \sprod{\alpha_{n-1}}{x} > 0 \big\}.
\]
The corresponding Weyl (or Coxeter) group has presentation
\[
	W = \langle r,s : r^n = \Id = s^2,  srs = r^{-1} \rangle.
\]
In fact, the group $W$ is the dihedral group which consists of $n$ rotations
\[
	r_j = r^{j} = 
	\begin{pmatrix}
		\cos\frac{2\pi j}n & -\sin\frac{2\pi j}n\\
		\sin\frac{2\pi j}n&\cos\frac{2\pi j}n
	\end{pmatrix}
	,
	\quad
	j \in \ZZ /n \ZZ,
\]
and $n$ reflections (or symmetries)
\[
	s_j =s r^{j} =
	\begin{pmatrix}
		-\cos\frac{2\pi j}n & \sin\frac{2\pi j}n\\
		\sin\frac{2\pi j}n  & \cos\frac{2\pi j}n
	\end{pmatrix}
	,
	\quad j \in \ZZ / n \ZZ.
\]
If $n$ is odd then all roots are in the same $W$-orbit while, if $n$ is even there are two $W$-orbits: 
$W\!.\alpha_0$ and $W\!.\alpha_1$. Thus, if $n$ is odd, we set $\kappa > 0$ to be the joint multiplicity of all roots while,
if $n$ is even, we let $\kappa_0 > 0$, respectively $\kappa_1 > 0$, to be the multiplicity of the roots in $W\!.\alpha_0$, 
respectively in $W\!.\alpha_1$. Set
\[
	\kappamin=
	\begin{cases}
		\kappa & \text{if $n$ is odd,} \\
		\min\{\kappa_0, \kappa_1\} & \text{if $n$ is even,}
	\end{cases}
	\quad\text{and}\quad
	\kappamax=\begin{cases}
		\kappa & \text{if $n$ is odd,} \\
		\max\{\kappa_0,\kappa_1\} & \text{if $n$ is even.}\\
	\end{cases}
\]
See Figure \ref{10gon} for a picture of $I_5$.

\begin{figure}[ht!]
	\includegraphics[width=120mm]{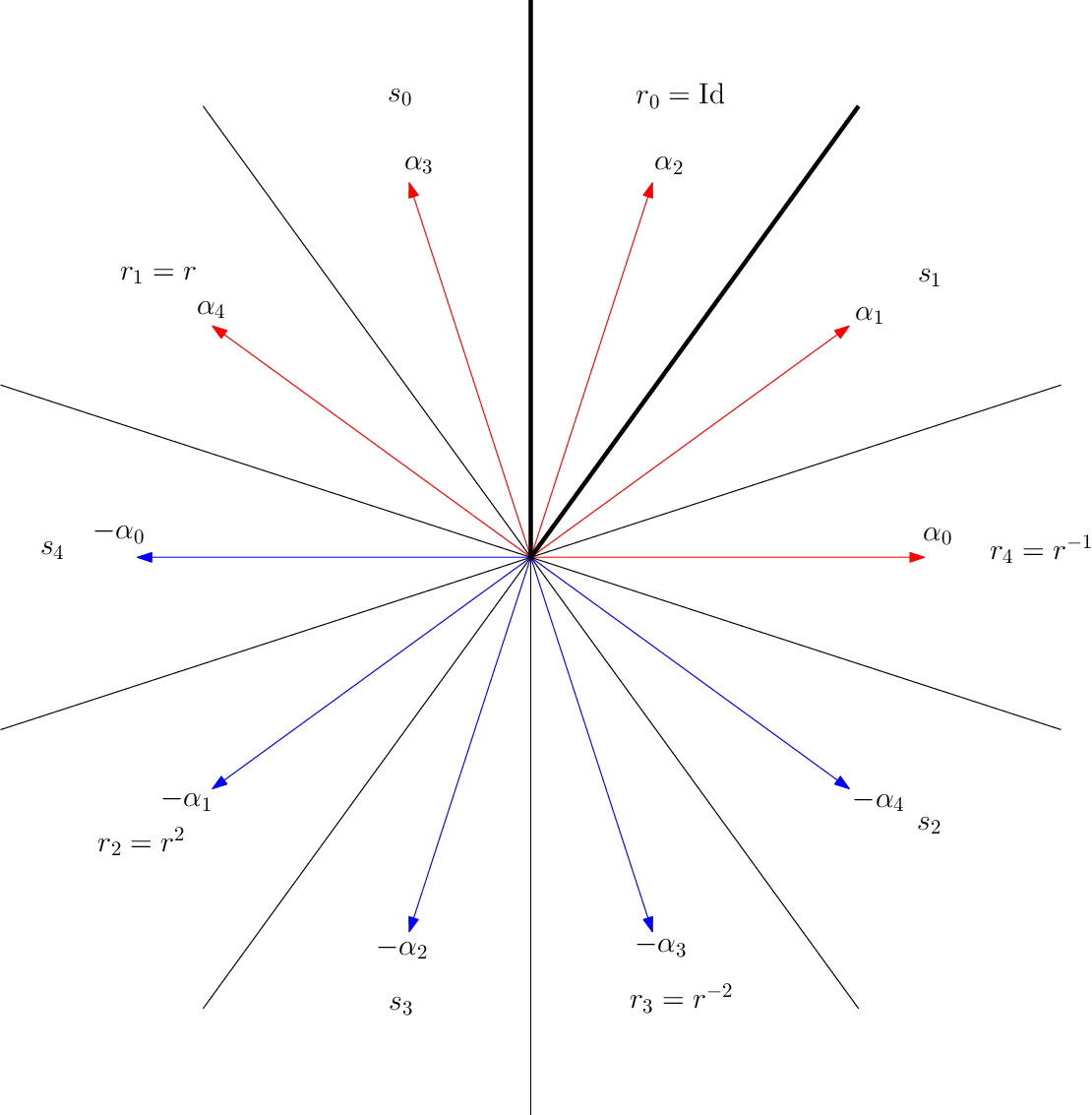}
	\caption{The case $n=5$}
	\label{10gon}
\end{figure}

Given $x,y \in \closedchamber$ and $j \in \ZZ/n\ZZ$, we define
\begin{alignat*}{2}
	\rho_j&=\sprod{x}{y - r_j.y}, \qquad &\sigma_j &= \sprod{x}{y - s_j. y}, \\
	D_{r_j}&=1 + c \rho_j,
	&D_{s_j}&=1 + c \sigma_j.
\end{alignat*}
Let
\begin{alignat*}{2}
	D_R&=\prod_{j \in \ZZ /n \ZZ} D_{r_j}, \qquad
	&D_S&= \prod_{j \in \ZZ/n\ZZ}D_{s_j}, \\
	\mathfrak{D}_R&=\sum_{j \in \ZZ /n \ZZ} \frac1{D_{r_j}},
	\qquad
	&\mathfrak{D}_{S}&=\sum_{j \in \ZZ/n\ZZ}\frac1{D_{s_j}},
\end{alignat*}
and
\[
	\mathfrak{S}=\prod_{j \in \ZZ /n \ZZ} \sigma_j.
\]
Let us observe that both $\rho_j$ and $\sigma_j$ are non-negative for every $j \in \ZZ /n \ZZ$. Moreover,  
$\rho_0 = 0$ and $D_{r_0}=1$ are trivial. The following two lemmas allows us to compare the $\rho_j$'s and $\sigma_j$'s 

\begin{lemma}
	\label{LemmaRhojSigmajInnerProduct}
	The following properties hold, for all $x,y \in \closedchamber$:
	\[
		\left\{
			\begin{aligned}
				\sigma_0 & = 2\sprod{\alpha_0}{x}\sprod{\alpha_0}{y}, \\
				\sigma_1 & = 2\sprod{\alpha_{n-1}}{x} \sprod{\alpha_{n-1}}{y}, \\
				\sigma_j &\approx \sprod{x}{y}, \quad \text{for all } j \in \{2, \ldots, n-1\},
			\end{aligned}
		\right.
		\quad\text{and}\quad
		\left\{
			\begin{aligned}
				\rho_1 &= 2 \big( \sin \tfrac\pi n\big) \sprod{\tilde{r}. x}{y}, \\
				\rho_{n-1} &= 2 \big( \sin  \tfrac\pi n \big) \sprod{x}{\tilde{r}. y}, \\
				\rho_j &\approx \sprod{x}{y}, \quad \text{for all } j \in \{2, 3, \ldots, n-2\},
			\end{aligned}
		\right.
	\]
	where $\tilde{r}$ is the rotation about the origin in $\RR^2$ by the angle $(\frac\pi2-\frac\pi n)$, that is
	\begin{equation}
		\label{eq:8}
		\tilde{r} = 
		\begin{pmatrix}
			\sin\frac\pi n &-\cos\frac\pi n \\
			\cos\frac\pi n&\sin\frac\pi n
		\end{pmatrix}.
	\end{equation}
\end{lemma}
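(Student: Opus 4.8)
The plan is to pass to polar coordinates, which turns every $\rho_j$ and $\sigma_j$ into a product of two sines and makes all six assertions transparent. Write $x=\abs{x}(\cos\theta_x,\sin\theta_x)$ and $y=\abs{y}(\cos\theta_y,\sin\theta_y)$. The chamber conditions $\sprod{\alpha_0}{x}>0$ and $\sprod{\alpha_{n-1}}{x}>0$ amount to $\theta_x\in[\tfrac\pi2-\tfrac\pi n,\tfrac\pi2]$, and likewise for $\theta_y$, so that $\abs{\theta_x-\theta_y}\leqslant\tfrac\pi n$. Since $\sprod{x}{y}=\abs{x}\abs{y}\cos(\theta_x-\theta_y)$ with $\cos\tfrac\pi n\leqslant\cos(\theta_x-\theta_y)\leqslant1$, this already records the comparison $\sprod{x}{y}\approx\abs{x}\abs{y}$, which I will use throughout to convert bounds in $\abs{x}\abs{y}$ into bounds in $\sprod{x}{y}$.

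Next I would read off the action of each group element on angles: the rotation $r_j$ sends $\theta_y$ to $\theta_y+\tfrac{2\pi j}n$, while $s_j$ is the reflection in the line of angle $\phi_j=\tfrac\pi2-\tfrac{\pi j}n$ and hence sends $\theta_y$ to $2\phi_j-\theta_y$. Feeding these into $\rho_j=\sprod{x}{y}-\sprod{x}{r_j.y}$ and $\sigma_j=\sprod{x}{y}-\sprod{x}{s_j.y}$ and applying $\cos A-\cos B=-2\sin\tfrac{A+B}2\sin\tfrac{A-B}2$ yields the key identities
\[
\rho_j=2\abs{x}\abs{y}\,\sin\tfrac{\pi j}n\,\sin\!\big(\tfrac{\pi j}n-\theta_x+\theta_y\big),
\qquad
\sigma_j=2\abs{x}\abs{y}\,\sin(\theta_x-\phi_j)\,\sin(\theta_y-\phi_j).
\]
Both right-hand sides are manifestly nonnegative on $\closedchamber\times\closedchamber$, recovering the positivity of $\rho_j$ and $\sigma_j$ noted before the lemma.

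From these two formulas the four exact identities drop out by recognizing each linear form as a sine. At $j=0$ one has $\phi_0=\tfrac\pi2$ and $\sin(\theta_x-\phi_0)=-\cos\theta_x$, so $\sigma_0=2\abs{x}\abs{y}\cos\theta_x\cos\theta_y=2\sprod{\alpha_0}{x}\sprod{\alpha_0}{y}$ because $\alpha_0=(1,0)$. At $j=1$, since $\alpha_{n-1}$ has polar angle $\pi-\tfrac\pi n$ one checks $\sprod{\alpha_{n-1}}{x}=\abs{x}\sin(\theta_x-\phi_1)$, whence $\sigma_1=2\sprod{\alpha_{n-1}}{x}\sprod{\alpha_{n-1}}{y}$. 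For the rotations, the definition \eqref{eq:8} gives $\sprod{\tilde{r}.x}{y}=\abs{x}\abs{y}\sin(\tfrac\pi n-\theta_x+\theta_y)$ and $\sprod{x}{\tilde{r}.y}=\abs{x}\abs{y}\sin(\tfrac\pi n+\theta_x-\theta_y)$; combined with $\sin\tfrac{\pi(n-1)}n=\sin\tfrac\pi n$ these produce $\rho_1=2\sin\tfrac\pi n\,\sprod{\tilde{r}.x}{y}$ and $\rho_{n-1}=2\sin\tfrac\pi n\,\sprod{x}{\tilde{r}.y}$.

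For the comparison statements I would track the range of each sine argument. For $2\leqslant j\leqslant n-1$, the quantity $\theta_x-\phi_j=\theta_x-\tfrac\pi2+\tfrac{\pi j}n$ runs over $[\tfrac{\pi(j-1)}n,\tfrac{\pi j}n]\subseteq[\tfrac\pi n,\pi-\tfrac\pi n]$, where $\sin\geqslant\sin\tfrac\pi n$; hence both factors in $\sigma_j$ lie in $[\sin\tfrac\pi n,1]$ and $2\sin^2\tfrac\pi n\,\abs{x}\abs{y}\leqslant\sigma_j\leqslant2\abs{x}\abs{y}$. Likewise, for $2\leqslant j\leqslant n-2$ one has $\sin\tfrac{\pi j}n\geqslant\sin\tfrac\pi n$ and $\tfrac{\pi j}n-\theta_x+\theta_y\in[\tfrac{\pi(j-1)}n,\tfrac{\pi(j+1)}n]\subseteq[\tfrac\pi n,\pi-\tfrac\pi n]$, so the same two-sided bound holds for $\rho_j$. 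Together with $\sprod{x}{y}\approx\abs{x}\abs{y}$ this gives $\sigma_j\approx\sprod{x}{y}$ and $\rho_j\approx\sprod{x}{y}$ on the stated ranges. The argument has no genuine difficulty: the one real step is the passage to the sine-product identities, after which everything reduces to checking that the relevant arguments stay inside $[\tfrac\pi n,\pi-\tfrac\pi n]$. The only place demanding care is the index-by-index bookkeeping of these angular ranges, together with the sign $\sin(\theta_x-\phi_0)=-\cos\theta_x$ at $j=0$, where the two negative factors conspire to give the positive quantity $2\sprod{\alpha_0}{x}\sprod{\alpha_0}{y}$.
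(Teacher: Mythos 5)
Your proof is correct, and its route differs from the paper's in a way worth recording. For the four exact identities the two arguments are really the same computation in different clothing: your sum-to-product formulas $\rho_j=2\abs{x}\abs{y}\,\sin\tfrac{\pi j}{n}\,\sin\bigl(\tfrac{\pi j}{n}-\theta_x+\theta_y\bigr)$ and $\sigma_j=2\abs{x}\abs{y}\,\sin(\theta_x-\phi_j)\sin(\theta_y-\phi_j)$ are the polar-coordinate form of what the paper obtains structurally, namely the reflection identity $\sigma_{-j}=2\sprod{\alpha_j}{x}\sprod{\alpha_j}{y}$ and the matrix factorization of $\Id-r_j$ as $2\sin\tfrac{\pi j}{n}$ times the rotation by $\tfrac{\pi j}{n}-\tfrac{\pi}{2}$. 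The genuine divergence is in the comparability claims $\sigma_j\approx\sprod{x}{y}$ and $\rho_j\approx\sprod{x}{y}$. The paper proves these softly: both sides are homogeneous of bidegree $(1,1)$ in $(x,y)$ and attain strictly positive extrema as $x,y$ range over the compact unit-sphere section of $\closedchamber$, so they are comparable --- an argument that is short, avoids all angular bookkeeping, and for the rotations only requires checking that $(\Id-r_j)$ moves that compact set off itself; the price is that it produces no explicit constants. You instead track the range of each sine argument and show it stays in $[\tfrac{\pi}{n},\pi-\tfrac{\pi}{n}]$, which costs the index-by-index verification you mention but yields quantitative two-sided bounds, e.g.
\begin{equation*}
	2\sin^2\!\bigl(\tfrac{\pi}{n}\bigr)\sprod{x}{y}\;\leqslant\;\sigma_j\;\leqslant\;\tfrac{2}{\cos(\pi/n)}\sprod{x}{y},
\end{equation*}
and the analogous bounds for $\rho_j$, so the implied constants in the lemma become explicit in $n$. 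A further small bonus of your formulas is that the nonnegativity of all $\rho_j$ and $\sigma_j$ on $\closedchamber\times\closedchamber$, which the paper asserts separately before the lemma, is immediate from the sign discussion of the sine factors. All of your intermediate claims check out (in particular the identification of $s_j$ as the reflection in the line of angle $\phi_j=\tfrac{\pi}{2}-\tfrac{\pi j}{n}$, the identity $\sprod{\alpha_{n-1}}{x}=\abs{x}\sin(\theta_x-\phi_1)$, and the use of $\sin\tfrac{\pi(n-1)}{n}=\sin\tfrac{\pi}{n}$ for $\rho_{n-1}$), so the proposal stands as a complete, more quantitative alternative to the paper's proof.
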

\vspace{1mm}
\begin{proof}
	Since for each $j \in \{0, 1, \ldots, n-1\}$, $s_{-j}$ is the reflection with respect to $\alpha_j^\perp$, we get
	\[
		\sigma_{-j} = 2\sprod{\alpha_j}{x} \sprod{\alpha_j}{y}.
	\]
	If $0 < j < n-1$, then both $\sigma_{-j}$ and $\sprod{x}{y}$ reach (strictly) positive extrema, as $x$ and $y$ run through
	the compact set
	\[
		\text{S}(\closedchamber)=\big\{z \in \closedchamber : \norm{z} =1 \big\}.
	\]
	Hence, by homogeneity, we deduce that
	\[
		\sigma_{-j} \approx \sprod{x}{y}, \quad \text{ for all } x, y \in \closedchamber.
	\]
	Let us now turn to rotations. Notice that
	\[
		\Id- r_j
		=
		2 \big(\sin \tfrac{\pi j}{n} \big)
		\begin{pmatrix}
			\sin\frac{\pi j}n&\cos\frac{\pi j}n\\
			-\cos\frac{\pi j}n&\sin\frac{\pi j}n
		\end{pmatrix}
	\]
	where the matrix represents the rotation about the origin in $\RR^2$ by the angle $(\frac{\pi j}n - \frac\pi2)$.
	We deduce on the one hand that
	\[
		\left\{
		\begin{aligned}
			\rho_1 &= 2 (\sin \tfrac\pi n) \sprod{x}{(\tilde{r})^{-1}.y}, \\
			\rho_{n-1} &= 2 (\sin\tfrac\pi n) \sprod{x}{\tilde{r}. y}.
		\end{aligned}
		\right.
	\]
	On the other hand, for $1 < j < n-1$, the image $(\Id-r_j).\text{S}(\closedchamber)$ doesn't meet 
	$\text{S}(\closedchamber)$, thus arguing as for reflections we arrive at
	\[
		\rho_j \approx \sprod{x}{y}, \quad \text{ for all } x, y \in \closedchamber. \qedhere
	\]
\end{proof}

\begin{lemma}
	\label{LemmaRhojGeSigmak}
	For each $j \in \ZZ / n \ZZ \smallsetminus \{0\}$, there is $k \in \ZZ/n\ZZ$ such that 
	\[
		\rho_j\geqslant\sigma_k,
		\quad\text{ for all } x,y \in \closedchamber.
	\]
\end{lemma}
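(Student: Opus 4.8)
The plan is to pass to polar coordinates adapted to the chamber and reduce everything to an elementary trigonometric inequality. Since $\closedchamber$ is the wedge of opening angle $\frac\pi n$ bordered by the walls $\alpha_0^\perp$ and $\alpha_{n-1}^\perp$, I would write
\[
x = a\big(\cos(\tfrac\pi2 - u),\, \sin(\tfrac\pi2 - u)\big), \qquad y = b\big(\cos(\tfrac\pi2 - v),\, \sin(\tfrac\pi2 - v)\big),
\]
with $a, b \geqslant 0$ and $u, v \in [0, \phi]$, where $\phi = \frac\pi n$. A direct computation with the rotation $r_j$ (by $\frac{2\pi j}n$) and the reflection $s_k$ (in the line at angle $\frac\pi2 - k\phi$) then gives the closed forms
\[
\rho_j = 2ab\,\sin(j\phi)\,\sin(j\phi + u - v), \qquad \sigma_k = 2ab\,\sin(u - k\phi)\,\sin(v - k\phi),
\]
both manifestly well defined modulo $n$. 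When $ab = 0$ the asserted inequality is trivial, so I may assume $ab > 0$.

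Next I would convert $\rho_j \geqslant \sigma_k$ into a statement about the sign of a single product. Expanding both products into cosines and recombining by the sum-to-product identity, the difference factors as
\[
\rho_j - \sigma_k = -2ab\,\sin\!\big(u + (j - k)\phi\big)\,\sin\!\big(v - (j + k)\phi\big).
\]
Thus the statement is equivalent to finding, for each $j$, an index $k$ for which the product $\sin(u + (j-k)\phi)\,\sin(v - (j+k)\phi)$ is $\leqslant 0$ throughout the square $[0,\phi]^2$. The decisive structural observation is that, because each argument ranges over an interval of length $\phi = \frac\pi n < \pi$ whose endpoints lie on the grid $\phi\,\ZZ$, neither sine can change sign on $[0,\phi]$: the interior of such an interval never contains an integer multiple of $\pi$. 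Hence each factor keeps a constant sign, and the whole problem collapses to choosing $k$ so that the two (constant) signs are opposite.

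Finally I would exhibit $k$ explicitly by a case split at $j = \frac n2$. For $1 \leqslant j \leqslant \frac n2$ I expect $k = j$ to work: then $u + (j-k)\phi = u \in [0,\phi]$ keeps the first factor $\geqslant 0$, while $v - (j+k)\phi = v - 2j\phi$ lies in $[-\pi, 0)$ (here $2j\phi \leqslant \pi$ is exactly what the constraint $j \leqslant \frac n2$ buys), forcing the second factor $\leqslant 0$. For $\frac n2 \leqslant j \leqslant n-1$ I expect $k = n - j$ to work: then $v - (j+k)\phi = v - \pi$ keeps the second factor $\leqslant 0$, while $u + (j-k)\phi = u + (2j - n)\phi \in [0, \pi)$ keeps the first factor $\geqslant 0$. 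In both regimes the product is $\leqslant 0$ and the inequality follows. The main obstacle is not any single computation but the careful sign bookkeeping in this last step: one must verify that every argument lands in the correct half-period $[m\pi, (m+1)\pi]$, check that the two candidate choices agree at the overlap $j = \frac n2$ when $n$ is even, and note that $k = n-j$ corresponds to the root $\alpha_j$ through $\sigma_{n-j} = 2\sprod{\alpha_j}{x}\sprod{\alpha_j}{y}$, so that no index is counted outside $\ZZ/n\ZZ$.
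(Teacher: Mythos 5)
Your proof is correct, but it follows a genuinely different route from the paper. You work in polar coordinates adapted to the wedge: your closed forms $\rho_j = 2ab\sin(j\phi)\sin(j\phi+u-v)$ and $\sigma_k = 2ab\sin(u-k\phi)\sin(v-k\phi)$, the factorization $\rho_j-\sigma_k = -2ab\,\sin\bigl(u+(j-k)\phi\bigr)\sin\bigl(v-(j+k)\phi\bigr)$, and the sign bookkeeping in both regimes ($k=j$ for $1\leqslant j\leqslant \tfrac n2$, $k=n-j$ for $\tfrac n2\leqslant j\leqslant n-1$, agreeing at $j=\tfrac n2$) all check out. The paper instead argues abstractly: since $r_{-j}\neq\Id$, there is a positive root $\alpha_\ell$ with $r_{-j}.\alpha_\ell$ negative; setting $k\equiv j-\ell\pmod n$, so that $s_{k-j}=s_{-\ell}$ is the reflection in $\alpha_\ell^\perp$, one gets in a single line
\[
\rho_j-\sigma_k=\sprod{x}{s_{k-j}(r_j.y)-r_j.y}=-2\sprod{\alpha_\ell}{x}\sprod{\alpha_\ell}{r_j.y}\geqslant 0,
\]
since $\sprod{\alpha_\ell}{x}\geqslant 0$ and $\sprod{\alpha_\ell}{r_j.y}=\sprod{r_{-j}.\alpha_\ell}{y}\leqslant 0$ for $x,y\in\closedchamber$. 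The paper's argument is shorter, coordinate-free, and would transfer verbatim to any finite reflection group, whereas yours is tied to the dihedral geometry; on the other hand, yours produces explicit formulas and identifies the witnessing index concretely (your $k=j$ and $k=n-j$ correspond, in the paper's notation, to the choices $\ell=0$ and $\ell\equiv 2j-n$, for which $r_{-j}.\alpha_\ell$ is indeed a negative root), which is more information than bare existence and also re-derives the closed forms used elsewhere in Lemma \ref{LemmaRhojSigmajInnerProduct}.
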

\begin{proof}
	Let $j \in \ZZ/n \ZZ \smallsetminus \{0\}$. As $r_{-j} \neq \Id$, there exists a positive root $\alpha_\ell$ such that 
	$r_{-j}.\alpha_\ell$ is negative. Let $k \equiv j - \ell \pmod n$, so that $s_{k-j}\ssb=\ssb s_{-\ell}$ is the reflection
	with respect to $\alpha_\ell^\perp$. Then
	\begin{align*}
		\rho_j-\sigma_k
		&=\sprod{x}{s_k. y - r_j. y} \\
		&=\sprod{x}{s_{k-j}( r_j. y) - r_j. y} \\
		&=-2 \underbrace{\sprod{\alpha_\ell}{x}}_{\geqslant 0}
		\underbrace{\sprod{\alpha_\ell}{r_j. y}}_{\leqslant 0}
		\geqslant 0.
		\qedhere
	\end{align*}
\end{proof}

Later we will also need the following lemma.

\begin{lemma}
	\label{LemmaExponentialSum}
	For $m_1, \ldots, m_k \in \ZZ/n\ZZ$, we set
	\begin{equation}
		\label{ExponentialSum}
		I(m_1,\dots,m_k)
		=
		\sum_{\atop{j_1, \ldots, j_k \in \ZZ/n\ZZ}{\mathrm{distinct}}}
		e^{\rmi\frac{2\pi}n(m_1j_1+\ldots+ m_k j_k)}.
	\end{equation}
	Then
	\begin{enumerate}[label=\rm (\alph*), ref=\alph*]
		\item
		\label{en:1:1}
		$I(m_1,\dots,m_k)$ is real valued and symmetric under permutations of $m_1, \ldots, m_k$;
		\item
		\label{en:1:2}
		$I(m_1,\dots,m_k)=
		\begin{cases}
			(n - k + 1) I(m_1, \ldots, m_{k-1}) & \text{if $m_k \equiv 0 \pmod n$}, \\
			-\sum_{1 \leqslant i < k} I(m_1, \ldots, m_i + m_k, \ldots, m_{k-1}) 
			&\text{if $m_k \not\equiv 0 \pmod n$};
		\end{cases}$
		\item
		\label{en:1:3}
		for every $m_1,\ldots, m_k \in \ZZ/n\ZZ$, there exists $c \in \ZZ$ such that
		\[
			I(m_1, \ldots, m_k)= c I(m_1 + \ldots + m_k);
		\]
		\item
		\label{en:1:4}
		for every $m \in \ZZ/n\ZZ$,
		\[
			I(m)=\begin{cases}
			n & \text{if $m \equiv 0 \pmod n$},\\
			0 & \text{if $m \not\equiv 0 \pmod n$}.
			\end{cases}
		\]
\end{enumerate}
\end{lemma}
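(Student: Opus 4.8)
The plan is to treat the four claims in their natural logical order: establish the base case (d) and the reality/symmetry statement (a) by direct computation, prove the recursion (b) by isolating one summation index, and finally deduce (c) from (b) and (d) by induction on $k$.

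Claim (d) is immediate, since $I(m)=\sum_{j\in\ZZ/n\ZZ}e^{\rmi\frac{2\pi}{n}mj}$ is a geometric sum: it equals $n$ when $m\equiv0\pmod n$, and it vanishes otherwise because $e^{\rmi\frac{2\pi}{n}m}$ is then a nontrivial $n$-th root of unity. For (a), reality follows by substituting $j_i\mapsto -j_i$ for all $i$; this is a bijection of $\ZZ/n\ZZ$ preserving distinctness, under which the exponent merely changes sign, so $\overline{I(m_1,\ldots,m_k)}=I(m_1,\ldots,m_k)$. Symmetry under permutations of the $m_i$ follows from the corresponding relabeling of the summation variables $j_i$, which leaves the set of distinct tuples invariant.

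For the recursion (b), I would fix $j_1,\ldots,j_{k-1}$ distinct and sum over $j_k\notin\{j_1,\ldots,j_{k-1}\}$, writing the inner sum as the full sum over $\ZZ/n\ZZ$ minus the $k-1$ forbidden terms. When $m_k\equiv0\pmod n$, the full sum contributes $n$ and each forbidden term contributes $1$, which produces the factor $n-k+1$ multiplying $I(m_1,\ldots,m_{k-1})$. When $m_k\not\equiv0\pmod n$, the full sum vanishes by (d), leaving $-\sum_{i=1}^{k-1}e^{\rmi\frac{2\pi}{n}m_kj_i}$; for each $i$ this factor merges with $e^{\rmi\frac{2\pi}{n}m_ij_i}$ to give exactly $I(m_1,\ldots,m_i+m_k,\ldots,m_{k-1})$ in $k-1$ variables, yielding the stated alternating sum.

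Finally, (c) follows by induction on $k$, the case $k=1$ being trivial. In the inductive step I would apply (b): if $m_k\equiv0\pmod n$ then $m_1+\ldots+m_{k-1}\equiv m_1+\ldots+m_k\pmod n$ and the prefactor $n-k+1$ is an integer, while if $m_k\not\equiv0\pmod n$ each summand is a length-$(k-1)$ tuple whose entries again sum to $m_1+\ldots+m_k$; in either case the induction hypothesis expresses every term as an integer multiple of $I(m_1+\ldots+m_k)$, and summing preserves integrality of the coefficient. The only real care is needed in (b), where one must correctly account for the distinctness constraint when isolating $j_k$ and recognize the collision terms as shifted copies of $I$ in one fewer variable; once (b) is in place, (c) is a routine induction.
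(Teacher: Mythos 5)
Your proof is correct and follows exactly the route the paper intends: it establishes (d) as a geometric sum, deduces (a) from the definition, proves (b) by isolating $j_k$ and invoking the vanishing statement in (d), and obtains (c) from (b) by induction on $k$. The paper's own proof is only a brief sketch of this same logical structure, and your argument supplies the missing details correctly.
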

\begin{proof}
	\eqref{en:1:4} is elementary.
	\eqref{en:1:1} is easily deduced from the definition \eqref{ExponentialSum}. Same for the first claim in~\eqref{en:1:2}.
	The second claim in \eqref{en:1:2} follows from the second claim in \eqref{en:1:4}.
	Finally \eqref{en:1:3} is deduced from \eqref{en:1:2} by induction.
\end{proof}

Next, in what follows we need the elementary symmetric polynomials $\rme_j$ on $n$ variables, namely,
\begin{equation}
	\left\{
	\begin{aligned}
	\rme_0(X_1, \ldots, X_n) &\equiv 1, \\
	\rme_k(X_1, \ldots, X_n) &= \sum_{1 \leqslant j_1 < \ldots < j_k \leqslant n} X_{j_1} X_{j_2} \cdots X_{j_k}, 
	\quad 1 \leqslant k < n\\
	\rme_n(X_1, \ldots, X_n) &= X_1 X_2 \cdots X_n.
	\end{aligned}
	\right.
\end{equation}
A straightforward argument shows that for $k \in \{1, \ldots, n-1\}$,
\[
	\rme_k(X_1, \ldots, X_n) = \frac{1}{k!} 
	\sum_{\atop{1 \leq j_1, \ldots, j_k \leq n}{\text{distinct}}} X_{j_1} X_{j_2} \cdots X_{j_k}.
\]
Recall that the elementary symmetric polynomials generate the algebra of symmetric polynomials in $n$ variables.

The following lemma allows us to compare symmetric polynomials in $\rho_j$'s and $\sigma_j$'s.
\begin{lemma}
	\label{FundamentalLemma}
	For each $k \in \{0, 1, \ldots, n-1\}$,
	\[
		\rme_k(\rho_0,\dots,\rho_{n-1})=\rme_k(\sigma_0,\dots,\sigma_{n-1})
	\]
	while $\rme_n(\rho_0,\dots,\rho_{n-1})=0$ and $\rme_n(\sigma_0,\dots,\sigma_{n-1})=\mathfrak{S}$.
\end{lemma}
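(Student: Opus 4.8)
The plan is to compute each elementary symmetric polynomial $\rme_k(\rho_0,\dots,\rho_{n-1})$ and $\rme_k(\sigma_0,\dots,\sigma_{n-1})$ by expanding the defining inner products and harvesting the resulting exponential sums through Lemma~\ref{LemmaExponentialSum}. I would first fix complex coordinates, writing $x$ and $y$ as complex numbers $x=\abs{x}\rme^{\rmi\theta}$ and $y=\abs{y}\rme^{\rmi\psi}$ in $\RR^2\cong\CC$. In these coordinates the rotations $r_j$ act by multiplication by $\rme^{\rmi\frac{2\pi j}{n}}$ and the reflections $s_j$ act by $z\mapsto\rme^{\rmi\frac{2\pi j}{n}}\bar z$, so that each $\rho_j=\sprod{x}{y-r_j.y}$ and each $\sigma_j=\sprod{x}{y-s_j.y}$ becomes an explicit $\RR$-linear combination of $1$ and $\rme^{\pm\rmi\frac{2\pi j}{n}}$ times the fixed real quantities built from $\theta$, $\psi$, $\abs{x}$, $\abs{y}$. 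The key structural point is that $\rho_j$ and $\sigma_j$ differ only in how $j$ enters the phase, and summing over a product of $k$ distinct indices produces exactly the sums $I(m_1,\dots,m_k)$.

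Concretely, I would write $\rho_j=A-B\cos\bigl(\tfrac{2\pi j}{n}+\varphi\bigr)$ and $\sigma_j=A-B\cos\bigl(\tfrac{2\pi j}{n}+\varphi'\bigr)$ for suitable $A,B$ and phases $\varphi,\varphi'$ depending on $x,y$ but not on $j$; the rotation part gives one phase shift and the reflection part, because it involves $\bar z$, gives a different one. Then for $k<n$,
\begin{align*}
\rme_k(\rho_0,\dots,\rho_{n-1})
&=\frac{1}{k!}\sum_{\atop{j_1,\dots,j_k}{\mathrm{distinct}}}
\prod_{i=1}^k\Bigl(A-\tfrac{B}{2}\rme^{\rmi(\frac{2\pi j_i}{n}+\varphi)}-\tfrac{B}{2}\rme^{-\rmi(\frac{2\pi j_i}{n}+\varphi)}\Bigr),
\end{align*}
and expanding the product yields a sum of terms each of which is a constant in $A,B,\varphi$ times some $I(m_1,\dots,m_k)$ with $m_i\in\{-1,0,1\}$. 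The heart of the matter is that, by Lemma~\ref{LemmaExponentialSum}\eqref{en:1:3}, every such $I(m_1,\dots,m_k)$ equals an integer multiple of $I(m_1+\dots+m_k)$, which by part~\eqref{en:1:4} vanishes unless $m_1+\dots+m_k\equiv0\pmod n$. Since each $m_i\in\{-1,0,1\}$ and $k<n$, the only surviving sums are those where $m_1+\dots+m_k\equiv0$, and in every surviving term the phase dependence enters only through factors of the form $\rme^{\rmi(\sum m_i)\varphi}=\rme^{\rmi\cdot0\cdot\varphi}=1$. Thus the phase $\varphi$ drops out entirely, and the identical computation with $\varphi'$ in place of $\varphi$ gives the same value; this proves $\rme_k(\rho_0,\dots,\rho_{n-1})=\rme_k(\sigma_0,\dots,\sigma_{n-1})$ for $k<n$.

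The main obstacle, and the one place where the two cases genuinely diverge, is the top-degree coefficient $k=n$. Here $\rme_n$ is a single product $\prod_j\rho_j$, respectively $\prod_j\sigma_j$, and there is no longer freedom to discard terms: the expansion now contains the term $\sum m_i\equiv0$ with $m_i\in\{-1,0,1\}$ summing to a multiple of $n$, including the possibility $\sum m_i=\pm n$, which reintroduces a nonzero phase factor $\rme^{\pm\rmi n\varphi}$. For the rotations, $\rho_0=0$ forces the product $\prod_j\rho_j$ to vanish, giving $\rme_n(\rho_0,\dots,\rho_{n-1})=0$ immediately. For the reflections no factor vanishes, and the product is by definition $\mathfrak{S}=\prod_j\sigma_j$, so $\rme_n(\sigma_0,\dots,\sigma_{n-1})=\mathfrak{S}$ holds tautologically. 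The delicate verification is therefore only that the two generating-function computations agree below the top degree; I expect the bookkeeping of which $I(m_1,\dots,m_k)$ survive — and checking that no hidden $\sum m_i=\pm n$ contribution sneaks in when $k<n$ (which it cannot, since $\abs{\sum m_i}\le k<n$) — to be the only subtle step, with everything else reducing to the symmetry and reduction rules already established in Lemma~\ref{LemmaExponentialSum}.
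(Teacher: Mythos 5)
Your proof is correct, and although it runs on the same engine as the paper's --- Lemma \ref{LemmaExponentialSum}, whose parts \eqref{en:1:3} and \eqref{en:1:4} do the real work in both arguments --- the execution is genuinely different and more elementary. The paper lifts the problem to the space $\mathrm{S}^k(\CC^2)$ of symmetric $k$-tensors: it forms $R=\sum r_{j_1}\otimes\cdots\otimes r_{j_k}$ (distinct indices) and $S=s\otimes\cdots\otimes s$, proves the operator identity $R=SR$ by checking that $R$ annihilates the $(-1)$-eigenspace of $S$ (this is where the exponential sums appear, with all $m_i=\pm1$), evaluates the identity on $y\otimes\cdots\otimes y$ against $x\otimes\cdots\otimes x$ to get $\rme_k\big(\sprod{x}{r_0.y},\ldots\big)=\rme_k\big(\sprod{x}{s_0.y},\ldots\big)$, and then still needs a closing symmetric-function step to pass from these inner products to the differences $\rho_j$, $\sigma_j$. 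You instead work scalar-valued from the start: after complexifying, $\rho_j$ and $\sigma_j$ share the form $A-B\cos\big(\tfrac{2\pi j}{n}+\text{phase}\big)$ with the \emph{same} $A=\sprod{x}{y}$ and $B=\abs{x}\,\abs{y}$, the expansion of $\rme_k$ produces sums $I(m_1,\ldots,m_k)$ with $m_i\in\{-1,0,1\}$ (zeros coming from the constant $A$), and the lemma kills everything except the zero-frequency terms $m_1+\cdots+m_k=0$, which carry no phase; the inequality $\abs{m_1+\cdots+m_k}\leqslant k<n$ is exactly what turns ``zero mod $n$'' into ``zero'', and your observation that this breaks at $k=n$ through $\sum_i m_i=\pm n$ correctly pinpoints why the top coefficient is the only place the two families differ. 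Two details should be made explicit in a final write-up: that $A$ and $B$ coincide for the two families (immediate from $\sprod{x}{w.y}=\abs{x}\abs{y}\cos(\cdot)$), and that your labeling of the reflections, $z\mapsto\rme^{\rmi\frac{2\pi j}{n}}\bar z$, is not the paper's $s_j=sr^j$, which acts as $z\mapsto\rme^{\rmi(\pi-\frac{2\pi j}{n})}\bar z$; this mismatch is harmless, since $\rme_k$ is symmetric and under either labeling the phases attached to the $n$ reflections form an arithmetic progression of step $\tfrac{2\pi}{n}$, which is all your argument uses. In exchange for this bookkeeping you avoid the tensor algebra, the eigenspace decomposition and the final symmetric-polynomial conversion altogether; what the paper's formulation buys is a stronger, coordinate-free operator identity that is better positioned for generalization beyond the dihedral case.
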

\begin{remark}
	Let us observe that for all $k \in \{0, 1, \ldots, n-1\}$,
	$\rme_k(\rho_0,\rho_1,\dots,\rho_{n-1})=\rme_k(\rho_1,\dots,\rho_{n-1})$.
\end{remark}

\begin{proof}[Proof of Lemma \ref{FundamentalLemma}]
	First of all, the result is immediate when $k=0$ or $k=n$. Since
	\[
		\sum_{j \in \ZZ /n\ZZ}r_j = 0,
	\]	
	we deduce that $\rme_1(\rho_0,\dots,\rho_{n-1})$ and $\rme_1(\sigma_0,\dots,\sigma_{n-1})$ are equal to 
	$n\sprod{x}{y}$. Let us turn to the remaining cases where $1 < k < n$ and which are more involved. Let us consider the 
	linear operators
	\[
		S=\underbrace{s \otimes \dots \otimes s}_{\text{$k$ factors}}
	\]
	and
	\[
		R= 
		\sum_{\atop{j_1, \dots, j_k \in \ZZ/n \ZZ}{ \text{distinct}}}
		r_{j_1} \otimes\dots\otimes r_{j_k}
	\]
	acting on the space $\textrm{S}^k(\CC^2)$ of symmetric tensors in $\otimes^k\CC^2$. Notice that $S$ is an involution 
	which commutes with $R$. We claim that
	\begin{equation}
		\label{ClaimFundamentalLemma}
		R=SR=R S
	\end{equation}
	on $\textrm{S}^k(\CC^2)$. Let
	\begin{equation*}
		\textrm{S}^k(\CC^2)=E_{+1} \oplus E_{-1}
	\end{equation*}
	be the eigenspace decomposition of $S$. Then $R (E_{\pm1}) \subset E_{\pm1}$. Since, \eqref{ClaimFundamentalLemma} 
	trivially holds true on $E_{+1}$, it is enough to show that $R=0$ on $E_{-1}$.

	For the proof, we introduce suitable bases for our computations. For every $0 \leqslant \ell \leqslant k$, let us denote by
	$\tau_\ell$ the sum of tensors $v_1\!\otimes\dots\otimes v_k$ where $v_1,\dots,v_k$ are equal to $v_+ = (i, 1)$ or
	$v_-=(-i,1)$ with $\ell$ occurrences of $v_+$ and $(k-\ell)$ of $v_-$. Then $\{ \tau_\ell :  0 \leqslant \ell \leqslant k\}$
	is a basis of $\textrm{S}^k(\CC^2)$. Moreover, as $s(v_\pm) =  v_\mp$, we have $S(\tau_\ell) = \tau_{k-\ell}$.
	Hence $\{\tau_\ell + \tau_{k-\ell} : 0 \leqslant \ell \leqslant k/2\}$ is a basis of $E_{+1}$, and 
	$\{\tau_\ell-\tau_{k-\ell} : 0 \leqslant \ell < k/2 \}$ a basis of $E_{-1}$. Moreover, as
	\[
		r_j(v_\pm) = e^{\pm\rmi\frac{2\pi j}n}v_\pm,
	\]
	for each $j \in \ZZ /n \ZZ$, we obtain
	\begin{align*}
		R\big(\tau_\ell - \tau_{k-\ell}\big)
		&=R(\tau_\ell)- R(\tau_{k-\ell}) \\ 
		&= (\Re I_{k, \ell})(\tau_\ell - \tau_{k-\ell})+\rmi (\Im I_{k, \ell})(\tau_\ell + \tau_{k-\ell})
	\end{align*}
	for each $0 \leqslant \ell < k/2$ where $I_{k,\ssf\ell}$ denotes the exponential sum \eqref{ExponentialSum}
	with
	\[
		m_i=
		\begin{cases}
			1 & \text{$\ell$ times,}\\
			-1 & \text{$(k-\ell)$ times.}\\
		\end{cases}
	\]
	Notice that $m_1+\ldots+m_k=2\ell-k \not \equiv 0 \pmod n$, as $0 \leqslant 2 \ell < k < n$. By applying 
	\eqref{en:1:3} and \eqref{en:1:4} in Lemma \ref{LemmaExponentialSum}, we deduce that $I_{k, \ell} = 0$.

	In summary, $R$ vanishes on $E_{-1}$ and consequently \eqref{ClaimFundamentalLemma} holds true. By applying
	\[
		\sum_{\atop{j_1, \ldots, j_k \in \ZZ/n\ZZ}{\mathrm{distinct}}}
		r_{j_1} \otimes \ldots \otimes r_{j_k}
		=
		\sum_{\atop{j_1, \ldots , j_k \in \ZZ/n\ZZ}{\mathrm{distinct}}}
		s_{j_1}\otimes \ldots \otimes s_{j_k}.
	\]
	to $y\otimes \ldots \otimes y$ and taking the inner product with $x \otimes \ldots \otimes x$, we obtain
	\[
		\rme_k \big( \sprod{x}{r_0. y}, \ldots, \sprod{x}{r_{n-1}.y} \big)
		=
		\rme_k \big( \sprod{x}{s_0. y}, \ldots, \sprod{x}{s_{n-1}.y} \big)
	\]
	for every $1 < k < n$. Notice that this equality holds true for $k = 0$ and $k = 1$ too. We conclude that
	\begin{equation}
		\label{EqualityElementarySymmetricPolynomial}
		\rme_k\big(\rho_0, \ldots, \rho_{n-1}\big)
		=
		\rme_k\big(\sigma_0, \ldots, \sigma_{n-1}\big)
	\end{equation}	
	for every $0 < k < n$, by expressing both sides of \eqref{EqualityElementarySymmetricPolynomial} as the same polynomial in
	\[
		\rme_{k'} \big( \sprod{x}{r_0. y}, \ldots, \sprod{x}{r_{n-1}.y}\big)
		=
		\rme_{k'} \big(\sprod{x}{s_0. y}, \ldots, \sprod{x}{s_{n-1}.y}\big)
	\]
	with $0 \leqslant k' \leqslant k$ and the lemma follows.
\end{proof}

\begin{corollary}
	\label{DRDS}
	$D_R=D_S-c^n \mathfrak{S}$.
\end{corollary}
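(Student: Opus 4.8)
The plan is to expand the two products $D_R$ and $D_S$ into powers of the parameter $c$, whose coefficients are precisely the elementary symmetric polynomials evaluated at the $\rho_j$'s and the $\sigma_j$'s, and then to read off the identity directly from Lemma \ref{FundamentalLemma}. Indeed, the standard generating identity for elementary symmetric polynomials gives
\[
	D_R = \prod_{j \in \ZZ/n\ZZ} (1 + c \rho_j) = \sum_{k=0}^{n} c^k \rme_k(\rho_0, \ldots, \rho_{n-1})
\]
and likewise
\[
	D_S = \prod_{j \in \ZZ/n\ZZ} (1 + c \sigma_j) = \sum_{k=0}^{n} c^k \rme_k(\sigma_0, \ldots, \sigma_{n-1}),
\]
both being products of exactly $n$ factors indexed by $\ZZ/n\ZZ$, so that the two expansions run up to degree $n$.

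Next I would form the difference $D_S - D_R$ and compare the coefficients of equal powers of $c$. By Lemma \ref{FundamentalLemma} the coefficients agree for every $k \in \{0, 1, \ldots, n-1\}$, so all these terms cancel. The only surviving contribution is the top-degree term $k = n$, where the same lemma gives $\rme_n(\rho_0, \ldots, \rho_{n-1}) = 0$ and $\rme_n(\sigma_0, \ldots, \sigma_{n-1}) = \mathfrak{S}$. Hence
\[
	D_S - D_R = c^n \big( \rme_n(\sigma_0, \ldots, \sigma_{n-1}) - \rme_n(\rho_0, \ldots, \rho_{n-1}) \big) = c^n \mathfrak{S},
\]
which rearranges to the claimed identity $D_R = D_S - c^n \mathfrak{S}$.

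There is essentially no obstacle in this last step: all the substance has already been carried out in Lemma \ref{FundamentalLemma}, and what remains is only the bookkeeping of writing each product in the elementary-symmetric-polynomial basis. The one point deserving a moment's care is that both $D_R$ and $D_S$ are genuine products over the full index set $\ZZ/n\ZZ$, so that the degree-$n$ terms of the two expansions are exactly the two quantities compared in the lemma; once this is observed, the corollary is immediate.
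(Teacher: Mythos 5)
Your proof is correct and follows essentially the same route as the paper: expand both products $D_R$ and $D_S$ in powers of $c$ with elementary symmetric polynomials as coefficients, then invoke Lemma \ref{FundamentalLemma} to cancel all terms of degree $k<n$ and identify the top-degree term as $c^n\mathfrak{S}$. The only cosmetic difference is that the paper writes the expansion of $D_R$ only up to degree $n-1$ (absorbing the fact that $\rme_n(\rho_0,\ldots,\rho_{n-1})=0$ into the expansion), whereas you keep the degree-$n$ term and cancel it explicitly.
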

\begin{proof}
	On the one hand,
	\begin{equation*}
 		D_R=\sum_{0 \leqslant k < n} c^{k} \rme_k(\rho_0,\ldots,\rho_{n-1}),
	\end{equation*}
	and on the other hand,
	\begin{equation*}
		D_S=\sum_{0 \leqslant k \leqslant n}c^{k} \rme_k(\sigma_0,\ldots,\sigma_{n-1})
	\end{equation*}
	with $\rme_n(\sigma_0, \ldots,\sigma_{n-1})= \mathfrak{S}$. Then Lemma \ref{FundamentalLemma} allows us to conclude.
\end{proof}

\begin{corollary}
	\label{DRDRDSDS}
	$D_R\mathfrak{D}_{R}=D_S\mathfrak{D}_{S}$.
\end{corollary}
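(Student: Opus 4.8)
The plan is to relate each side to a logarithmic derivative in the parameter $c$ and then invoke Corollary \ref{DRDS}. Throughout, $x,y\in\closedchamber$ are fixed, so that the $\rho_j$'s and $\sigma_j$'s, hence $\mathfrak{S}$, are constants, while $D_R$ and $D_S$ are polynomials in $c$; I differentiate in $c$ and write $'=\frac{d}{dc}$.

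First I would record the identity
\[
	D_R\,\mathfrak{D}_R = n\,D_R - c\,D_R',
\]
together with its analogue for $S$. To see this, note that from $D_R=\prod_{j}(1+c\rho_j)$ the logarithmic derivative is $\frac{D_R'}{D_R}=\sum_j\frac{\rho_j}{1+c\rho_j}$; using $\frac{\rho_j}{1+c\rho_j}=\frac1c\big(1-\frac{1}{1+c\rho_j}\big)$ this becomes $\frac{D_R'}{D_R}=\frac1c\big(n-\mathfrak{D}_R\big)$, which rearranges to the displayed identity. The same computation, with $\sigma_j$ and $D_{s_j}$ in place of $\rho_j$ and $D_{r_j}$, gives $D_S\,\mathfrak{D}_S=n\,D_S-c\,D_S'$.

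It then remains to compare the two right-hand sides. Subtracting and using Corollary \ref{DRDS} in the form $D_R-D_S=-c^n\mathfrak{S}$ (together with $(D_R-D_S)'=-n\,c^{n-1}\mathfrak{S}$, valid because $\mathfrak{S}$ does not depend on $c$), I obtain
\[
	D_R\,\mathfrak{D}_R-D_S\,\mathfrak{D}_S = n(D_R-D_S)-c(D_R-D_S)' = -n\,c^n\mathfrak{S}+n\,c^n\mathfrak{S}=0,
\]
which is the claim. I do not anticipate any genuine obstacle: the only point requiring care is the logarithmic-derivative identity, and after that the two contributions coming from Corollary \ref{DRDS} cancel exactly. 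Alternatively, one can avoid differentiation altogether: since $D_R\,\mathfrak{D}_R=\sum_{j}\prod_{i\neq j}D_{r_i}=\rme_{n-1}(D_{r_0},\dots,D_{r_{n-1}})$ expands as $\sum_{k=0}^{n-1}(n-k)\,c^k\,\rme_k(\rho_0,\dots,\rho_{n-1})$, and likewise for $S$, the equality follows directly from Lemma \ref{FundamentalLemma}, the point being that only the symmetric functions $\rme_k$ with $k\leqslant n-1$ occur and those agree for the $\rho_j$'s and the $\sigma_j$'s, so the discrepancy at level $n$ never enters.
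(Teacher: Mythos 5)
Your proof is correct, but your primary argument takes a genuinely different route from the paper's. The paper proves the corollary directly from Lemma \ref{FundamentalLemma}: it writes $D_R\mathfrak{D}_R=\sum_{j}\prod_{i\neq j}D_{r_i}$ and $D_S\mathfrak{D}_S=\sum_{j}\prod_{i\neq j}D_{s_i}$ and observes that both are the \emph{same} polynomial in the shared quantities $\rme_k(\rho_0,\dots,\rho_{n-1})=\rme_k(\sigma_0,\dots,\sigma_{n-1})$, $0\leqslant k<n$ --- concretely, both equal $\sum_{k=0}^{n-1}(n-k)\,c^k\rme_k$, the point being that each product omits one factor, so the level-$n$ symmetric function (where $\rho$'s and $\sigma$'s disagree) never enters. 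This is precisely the ``alternative'' you sketch in your closing remark, so that part of your proposal coincides with the paper. Your main route is different: the logarithmic-derivative identity $D_R\mathfrak{D}_R=nD_R-c\,D_R'$ (and its $S$-analogue) is correct, and combining it with Corollary \ref{DRDS} in the form $D_R-D_S=-c^n\mathfrak{S}$, with $\mathfrak{S}$ independent of $c$, gives $D_R\mathfrak{D}_R-D_S\mathfrak{D}_S=\bigl(n-c\tfrac{d}{dc}\bigr)(-c^n\mathfrak{S})=0$, since the operator $n-c\tfrac{d}{dc}$ annihilates exactly the monomial $c^n$. What each approach buys: yours reuses the previously established Corollary \ref{DRDS} and makes the mechanism transparent (the defect $D_S-D_R$ is homogeneous of degree exactly $n$ in $c$, which is why it is killed); the paper's is purely combinatorial, avoids calculus, and does not pass through Corollary \ref{DRDS} at all, deriving both corollaries independently from Lemma \ref{FundamentalLemma}. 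The only points needing care in your version --- division by $c$ and by $D_R$ --- are harmless, since $c>0$ and $D_R\geqslant 1$ on $\closedchamber$, and the final identity is polynomial in $c$ anyway.
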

\begin{proof}
	This is obtained by expressing 
	\[
		D_R\mathfrak{D}_{R}=\sum_{0 \leqslant j < n} D_{r_0}\dots D_{r_{j-1}} D_{r_{j+1}} \dots D_{r_{n-1}}
	\]
	and
	\[
		D_S\mathfrak{D}_{S}=\sum_{0 \leqslant j< n}D_{s_0}\dots D_{s_{j-1}} D_{s_{j+1}} \dots D_{s_{n-1}}
	\]
	as the same polynomial in terms of
	\[
		\rme_k(\sigma_0, \ldots,\sigma_{n-1})=\rme_k(\rho_0,\ldots,\rho_{n-1})
	\]
	with $0 \leqslant k < n$.
\end{proof}

\subsection{Sharp estimates}
In this section we prove our main result. To do so, we introduce the following barrier functions:
\begin{equation}
	\label{DihedralQ}
	Q_w=
	\begin{cases}
		1 & \text{if } w=\Id, \\
		D_{s_j} &\text{if $w = s_j$ $(0 \leqslant j < n)$}, \\
		D_{r_j} \frac{D_S}{D_R} & \text{if $w= r_j$ $(0 < j < n)$}.\\
	\end{cases}
\end{equation}
We follow the overall strategy presented in Section \ref{OverallStrategy}.

Recall that $\Lambda_{\Id} \equiv 1$. We are going to determine the sign of $\Lambda_w$ ($w \neq \Id $) 
depending whether $c > 0$ is small or large.

\noindent
$\bullet$
\underline{Assume that} $w=s_j$ \underline{with} $j\in\ZZ/n\ZZ$. 
In this case \eqref{Lambda_w} writes
\begin{equation*}
	\Lambda_{s_j}
	=\sigma_j
	-\tfrac{c \sigma_j}{D_{s_j}}
	-\kappa(\alpha_j) 
	\bigl\{D_{s_j}-\tfrac1{D_{s_j}}\bigr\}
	-
	\sum_{\atop{k\in\ZZ/n\ZZ}{k \neq j}}
	\kappa(\alpha_k) 
	\Big\{\tfrac{D_{s_j}}{D_{r_{k-j}}}\tfrac{D_R}{D_S}-\tfrac1{D_{s_k}}\Big\},
\end{equation*}
hence
\begin{equation*}
	\Lambda_{s_j} \geqslant \sigma_j-A_j-\kappamax B_j
	\quad\text{and}\quad
	\Lambda_{s_j} \leqslant \sigma_j - \kappamin B_j,
\end{equation*}
where
\begin{equation*}
	A_j=\tfrac{c \sigma_j}{D_{s_j}}
	\quad\text{and}\quad
	B_j=D_{s_j}-\tfrac1{D_{s_j}}
	+\sum_{\atop{k \in \ZZ/n\ZZ}{k \neq j}} 
	\Big\{\tfrac{D_{s_j}}{D_{r_{k-j}}} \tfrac{D_R}{D_S} - \tfrac1{D_{s_k}}\Big\}.
\end{equation*}

\begin{lemma}
	\label{LemmaB}
	We have $B_j = c\sigma_j C_j$ with $2^{1-n}\leqslant C_j \leqslant n+1$.
\end{lemma}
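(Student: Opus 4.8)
The plan is to first collapse the two families of rational functions in the definition of $B_j$ into a closed form, and then to estimate the resulting expression term by term. I would begin with the sum over $k\neq j$. Reindexing the rotation terms by $\ell=k-j$ and using that $D_{r_0}=1$, the first family telescopes,
\[
	\sum_{\substack{k\in\ZZ/n\ZZ\\ k\neq j}}\frac{D_{s_j}}{D_{r_{k-j}}}\frac{D_R}{D_S}
	=\frac{D_{s_j}D_R}{D_S}\sum_{\substack{\ell\in\ZZ/n\ZZ\\ \ell\neq 0}}\frac{1}{D_{r_\ell}}
	=\frac{D_{s_j}D_R}{D_S}\big(\mathfrak{D}_R-1\big),
\]
while $\sum_{k\neq j}\tfrac1{D_{s_k}}=\mathfrak{D}_S-\tfrac1{D_{s_j}}$. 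Substituting into $B_j$, the two occurrences of $\tfrac1{D_{s_j}}$ cancel and I am left with $B_j=D_{s_j}+\frac{D_{s_j}D_R}{D_S}(\mathfrak{D}_R-1)-\mathfrak{D}_S$.

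Next I would bring in the two corollaries. From Corollary \ref{DRDRDSDS} one has $\frac{D_R}{D_S}\mathfrak{D}_R=\mathfrak{D}_S$, so $\frac{D_R}{D_S}(\mathfrak{D}_R-1)=\mathfrak{D}_S-\frac{D_R}{D_S}$; regrouping yields
\[
	B_j=(D_{s_j}-1)\mathfrak{D}_S+D_{s_j}\,\frac{D_S-D_R}{D_S}.
\]
Since $D_{s_j}-1=c\sigma_j$ and, by Corollary \ref{DRDS}, $D_S-D_R=c^n\mathfrak{S}$, this becomes $B_j=c\sigma_j\mathfrak{D}_S+\frac{c^n\mathfrak{S}\,D_{s_j}}{D_S}$. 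Factoring out $c\sigma_j$ and using $\frac{\mathfrak{S}}{\sigma_j}=\prod_{i\neq j}\sigma_i$ together with $\frac{D_{s_j}}{D_S}=\prod_{i\neq j}D_{s_i}^{-1}$, the second term equals $c\sigma_j\prod_{i\neq j}\frac{c\sigma_i}{D_{s_i}}$, so that
\[
	C_j=\mathfrak{D}_S+\prod_{\substack{i\in\ZZ/n\ZZ\\ i\neq j}}\frac{c\sigma_i}{D_{s_i}}
	=\sum_{i\in\ZZ/n\ZZ}\frac{1}{D_{s_i}}+\prod_{\substack{i\in\ZZ/n\ZZ\\ i\neq j}}\frac{c\sigma_i}{D_{s_i}}.
\]

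Finally I would estimate this expression, using repeatedly the identity $\frac{1}{D_{s_i}}+\frac{c\sigma_i}{D_{s_i}}=1$, which forces each of the two summands attached to an index into $[0,1]$. The upper bound is immediate: the $n$ terms of $\mathfrak{D}_S$ and the single $(n-1)$-fold product are each at most $1$, giving $C_j\leqslant n+1$. For the lower bound I would split into cases. If $\frac{1}{D_{s_i}}\geqslant\frac12$ for some $i$, then $\mathfrak{D}_S\geqslant\frac12\geqslant 2^{1-n}$; otherwise $\frac{1}{D_{s_i}}<\frac12$ for every $i$, whence $\frac{c\sigma_i}{D_{s_i}}=1-\frac{1}{D_{s_i}}>\frac12$ for all $i$, so the product of its $n-1$ factors exceeds $2^{-(n-1)}=2^{1-n}$. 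In either case $C_j\geqslant 2^{1-n}$, and the power $n-1$ in the exponent is exactly the number of factors in that product.

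The main obstacle is the first stage: recognizing that the apparently complicated mixed sum simplifies once the rotation terms are reindexed and the two identities $D_R\mathfrak{D}_R=D_S\mathfrak{D}_S$ and $D_S-D_R=c^n\mathfrak{S}$ are invoked. Once the clean formula for $C_j$ is in hand the estimates are routine, and they also explain transparently why the loose but convenient constant $2^{1-n}$ appears.
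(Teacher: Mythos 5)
Your proof is correct and follows essentially the same route as the paper: the same reindexing of the rotation sum, the same application of Corollaries \ref{DRDS} and \ref{DRDRDSDS} to reach $C_j=\mathfrak{D}_S+\prod_{i\neq j}\tfrac{c\sigma_i}{D_{s_i}}$, and the same two-case argument (all $c\sigma_i$ large versus some $c\sigma_i$ small) for the bounds $2^{1-n}\leqslant C_j\leqslant n+1$. The only differences are cosmetic reorderings of the algebra, so there is nothing to add.
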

\begin{proof}
	We have
	\begin{align*}
		B_j&=D_{s_j}-\tfrac1{D_{s_j}}
		+
		\tfrac{D_{s_j}D_R}{D_S} 
		\sum_{\atop{k \in \ZZ/n\ZZ}{k \neq 0}} 
		\tfrac1{D_{r_k}} - 
		\sum_{\atop{k\in\ZZ/n\ZZ}{k\neq j}} \tfrac1{D_{s_k}}\\
		&=\tfrac{D_{s_j}}{D_S} (D_S-D_R)
		+ \tfrac{D_{s_j}D_R}{D_S} \mathfrak{D}_{R}
		- \mathfrak{D}_{ S}.
	\end{align*}
	Using Corollaries \ref{DRDS} and \ref{DRDRDSDS}, we obtain
	\begin{align*}
		B_j =\tfrac{D_{s_j}}{D_S} c^{n} \mathfrak{S}
		+(D_{s_j}-1) \mathfrak{D}_{S}
		=c \sigma_j C_j
	\end{align*}
	where
	\begin{equation*}
		C_j= 
		\Big( \prod_{\atop{k \in \ZZ/n\ZZ}{k \neq j}} \tfrac{c \sigma_k}{D_{s_k}}\Big)
		+\mathfrak{D}_{S}.
	\end{equation*}
	To conclude, we notice that on the one hand $C_j < n + 1$. On the other hand, if 
	$c \sigma_k \geqslant 1$ for every $k \in \ZZ/n\ZZ$, then $C_j \geqslant 2^{1-n}$;
	otherwise, there is $k \in \ZZ/n\ZZ$ such that $c \sigma_k < 1$, thus $C_j > \tfrac1{D_{s_k}} >\tfrac12$ which is 
	$\geqslant 2^{1-n}$.
\end{proof}

\begin{corollary}
	We have
	\begin{enumerate}[label=\rm (\alph*), ref=\alph*]
		\item
		$\Lambda_{s_j} \geqslant 0$, if $0 < c \leqslant \tfrac1{\kappamax(n + 1)+1}$;
		\item
		$\Lambda_{s_j} \leqslant 0$, if $c \geqslant \tfrac{2^{n-1}}{\kappamin}$.
	\end{enumerate}
\end{corollary}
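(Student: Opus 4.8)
The plan is to combine the two-sided estimate for $\Lambda_{s_j}$ recorded just above the corollary with Lemma \ref{LemmaB}, after noticing that both correction terms $A_j$ and $B_j$ carry the common nonnegative factor $\sigma_j$. This lets me pull $\sigma_j$ out and reduce each of the two claims to a one-line scalar inequality in $c$.

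First I would control $A_j$. Since $D_{s_j}=1+c\sigma_j\geqslant1$ and $\sigma_j\geqslant0$, we have
\[
A_j=\frac{c\sigma_j}{D_{s_j}}\leqslant c\sigma_j .
\]
Lemma \ref{LemmaB} gives $B_j=c\sigma_jC_j$ with $2^{1-n}\leqslant C_j\leqslant n+1$, so that $2^{1-n}c\sigma_j\leqslant B_j\leqslant(n+1)c\sigma_j$. Thus, after inserting these bounds, everything will be expressible as $\sigma_j$ times an affine function of $c$.

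For part (a) I start from $\Lambda_{s_j}\geqslant\sigma_j-A_j-\kappamax B_j$ and insert $A_j\leqslant c\sigma_j$ together with $B_j\leqslant(n+1)c\sigma_j$, obtaining
\[
\Lambda_{s_j}\geqslant\sigma_j\bigl(1-(\kappamax(n+1)+1)\,c\bigr).
\]
As $\sigma_j\geqslant0$, the right-hand side is nonnegative precisely when $c\leqslant\tfrac1{\kappamax(n+1)+1}$. For part (b) I start from $\Lambda_{s_j}\leqslant\sigma_j-\kappamin B_j$ and insert the lower bound $B_j\geqslant2^{1-n}c\sigma_j$, obtaining
\[
\Lambda_{s_j}\leqslant\sigma_j\bigl(1-\kappamin2^{1-n}c\bigr),
\]
which is nonpositive as soon as $\kappamin2^{1-n}c\geqslant1$, that is $c\geqslant\tfrac{2^{n-1}}{\kappamin}$.

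I do not expect a genuine obstacle here: the delicate work---simplifying the sum of rational functions defining $B_j$ by means of Corollaries \ref{DRDS} and \ref{DRDRDSDS} and bounding the resulting $C_j$---has already been carried out in Lemma \ref{LemmaB}. The only new ingredient is the elementary estimate $A_j\leqslant c\sigma_j$, and the sole point that deserves a moment's care is keeping track of the nonnegativity of $\sigma_j$, so that multiplying the scalar inequalities in $c$ by $\sigma_j$ preserves their direction.
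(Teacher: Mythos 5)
Your proof is correct and follows essentially the same route as the paper: both arguments factor out $\sigma_j\geqslant 0$, bound $A_j\leqslant c\sigma_j$ via $D_{s_j}\geqslant 1$, and invoke the two-sided bound $2^{1-n}\leqslant C_j\leqslant n+1$ from Lemma \ref{LemmaB} to reduce each claim to an affine inequality in $c$. The paper's proof is just a compressed version of yours, writing $\Lambda_{s_j}\geqslant\sigma_j\{1-c-c\kappamax(n+1)\}$ and $\Lambda_{s_j}\leqslant\sigma_j\{1-c\kappamin 2^{1-n}\}$ directly.
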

\begin{proof}
	On the one hand,
	\[
		\Lambda_{s_j} \geqslant \sigma_j \Big\{1-c-c \kappamax (n+1)\Big\},
	\]
	which is non-negative provided that $c \leqslant \tfrac1{\kappamax(n+1)+1}$. On the other hand,
	\[
		\Lambda_{s_j} \leqslant \sigma_j \Big\{1-c\kappamin 2^{1-n}\Big\},
	\]
	which is non-positive provided that $c \geqslant \tfrac{2^{n-1}}\kappamin$.
\end{proof}

\noindent
$\bullet$
\underline{Assume that} $w=r_j$ \underline{with} $j \in \ZZ/n\ZZ \smallsetminus \{0\}$.
In this case, \eqref{Lambda_w} writes
\begin{equation*}
	\Lambda_{r_j}
	=\rho_j
	\sum_{k \in \ZZ/n\ZZ} \tfrac{c \sigma_k}{D_{s_k}} + 
	\sum_{\atop{k \in \ZZ/n\ZZ \smallsetminus \{0\}}{k \neq j}}
	\tfrac{c \rho_k}{D_{r_k}}
	-\sum_{k \in \ZZ/n\ZZ}
	\kappa(\alpha_k) \Big\{ \tfrac{D_{r_j}}{D_{s_{k-j}}} \tfrac{D_S}{D_R} -\tfrac1{D_{s_k}} \Big\}
\end{equation*}
hence
\begin{equation*}
	\Lambda_{r_j} \geqslant \rho_j-\tilde{A}_j - \kappamax \tilde{B}_j
	\quad\text{and}\quad
	\Lambda_{r_j} \leqslant \rho_j - \kappamin \tilde{B}_j,
\end{equation*}
where
\begin{equation*}
	\tilde{A}_j=
	\sum_{k \in \ZZ/n\ZZ} 
	\tfrac{c\sigma_k}{D_{s_k}} 
	\sum_{\atop{k \in \ZZ/n\ZZ \smallsetminus \{0\}}{k \neq j}}
	\tfrac{c \rho_k}{D_{r_k}}
\end{equation*}
and
\begin{equation*}
	\tilde{B}_j= \sum_{k \in \ZZ/n\ZZ} 
	\Big\{\tfrac{D_{r_j}}{D_{s_{k-j}}} \tfrac{D_S}{D_R} - \tfrac1{D_{s_k}}\Big\}
	=\Big\{D_{r_j} \tfrac{D_S}{D_R} - 1 \Big\} \mathfrak{D}_{S}.
\end{equation*}

\begin{lemma}
	\label{LemmaTilde}
	We have
	\[
		\tilde{A}_j=\tfrac{c\rho_j}{D_{r_j}}+\tfrac{c^n \mathfrak{S}}{D_S} \mathfrak{D}_{R},
		\qquad\qquad
		\tilde{B}_j 
		=
		c \rho_j \mathfrak{D}_{R} + 
		\tfrac{c^{n} \mathfrak{S}}{D_S} \mathfrak{D}_{R},
	\]
	and
	\[
		1 \leqslant \mathfrak{D}_{R} \leqslant n.
	\]
\end{lemma}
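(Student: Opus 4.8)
The plan is to reduce both $\tilde{A}_j$ and $\tilde{B}_j$ to combinations of the quantities $D_R, D_S, \mathfrak{D}_R, \mathfrak{D}_S, \mathfrak{S}$ that are already tied together by Corollaries \ref{DRDS} and \ref{DRDRDSDS}, and then to read off the two closed forms. The only genuine input beyond bookkeeping is the elementary telescoping identity
\[
	\frac{c\sigma_k}{D_{s_k}} = 1 - \frac{1}{D_{s_k}}
	\qquad\text{and}\qquad
	\frac{c\rho_k}{D_{r_k}} = 1 - \frac{1}{D_{r_k}},
\]
which turns the sums $\sum_k \tfrac{c\sigma_k}{D_{s_k}}$ and $\sum_{k \neq 0} \tfrac{c\rho_k}{D_{r_k}}$ into $n - \mathfrak{D}_S$ and $n - \mathfrak{D}_R$ respectively (recall $\rho_0 = 0$, so the $k=0$ term drops out of the $\rho$-sum).

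For $\tilde{A}_j$ I would apply these identities to the two sums entering $\tilde{A}_j$, isolating the $k=j$ term $\tfrac{c\rho_j}{D_{r_j}} = 1 - \tfrac{1}{D_{r_j}}$ of the $\rho$-sum. This yields
\[
	\tilde{A}_j = \frac{c\rho_j}{D_{r_j}} + \big(\mathfrak{D}_R - \mathfrak{D}_S\big).
\]
The heart of the computation is then the identity $\mathfrak{D}_R - \mathfrak{D}_S = \tfrac{c^n\mathfrak{S}}{D_S}\mathfrak{D}_R$. This is where the two corollaries combine: Corollary \ref{DRDRDSDS} gives $\mathfrak{D}_S = \tfrac{D_R}{D_S}\mathfrak{D}_R$, so that $\mathfrak{D}_R - \mathfrak{D}_S = \mathfrak{D}_R\tfrac{D_S - D_R}{D_S}$, and Corollary \ref{DRDS} replaces $D_S - D_R$ by $c^n\mathfrak{S}$. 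Substituting this back produces the claimed formula for $\tilde{A}_j$.

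For $\tilde{B}_j$ I would start from the closed form $\tilde{B}_j = \big\{D_{r_j}\tfrac{D_S}{D_R} - 1\big\}\mathfrak{D}_S$ recorded just above the lemma. Writing the bracket over the common denominator $D_R$ and using $D_{r_j} = 1 + c\rho_j$, the numerator is $D_{r_j}D_S - D_R = (D_S - D_R) + c\rho_j D_S = c^n\mathfrak{S} + c\rho_j D_S$ by Corollary \ref{DRDS}. Hence
\[
	\tilde{B}_j = \frac{c^n\mathfrak{S} + c\rho_j D_S}{D_R}\,\mathfrak{D}_S,
\]
and replacing $\tfrac{\mathfrak{D}_S}{D_R}$ by $\tfrac{\mathfrak{D}_R}{D_S}$ (Corollary \ref{DRDRDSDS} once more) splits this into $\tfrac{c^n\mathfrak{S}}{D_S}\mathfrak{D}_R + c\rho_j\mathfrak{D}_R$, as desired. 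The two-sided bound on $\mathfrak{D}_R$ is immediate: since $\rho_k \geqslant 0$ we have $D_{r_k} = 1 + c\rho_k \geqslant 1$, whence $\tfrac{1}{D_{r_k}} \leqslant 1$ and $\mathfrak{D}_R \leqslant n$; and the single term $k=0$ already contributes $\tfrac{1}{D_{r_0}} = 1$, all others being positive, so $\mathfrak{D}_R \geqslant 1$.

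The only step requiring any thought is the identity $\mathfrak{D}_R - \mathfrak{D}_S = \tfrac{c^n\mathfrak{S}}{D_S}\mathfrak{D}_R$; once one notices that it follows by feeding Corollary \ref{DRDS} into the ratio produced by Corollary \ref{DRDRDSDS}, the remaining manipulations are routine and structurally identical for $\tilde{A}_j$ and $\tilde{B}_j$, so I anticipate no serious obstacle.
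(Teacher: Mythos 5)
Your proof is correct and follows essentially the same route as the paper's: the telescoping identity $\tfrac{c\sigma_k}{D_{s_k}}=1-\tfrac1{D_{s_k}}$ to rewrite $\tilde{A}_j$ as $\tfrac{c\rho_j}{D_{r_j}}+\mathfrak{D}_R-\mathfrak{D}_S$, then Corollaries \ref{DRDRDSDS} and \ref{DRDS} to convert $\mathfrak{D}_R-\mathfrak{D}_S$ into $\tfrac{c^n\mathfrak{S}}{D_S}\mathfrak{D}_R$, and for $\tilde{B}_j$ the same expansion of $D_{r_j}\tfrac{D_S}{D_R}-1$ via $D_S-D_R=c^n\mathfrak{S}$ followed by $\tfrac{\mathfrak{D}_S}{D_R}=\tfrac{\mathfrak{D}_R}{D_S}$. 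The bound $1\leqslant\mathfrak{D}_R\leqslant n$ is also argued exactly as in the paper, from $D_{r_0}=1$ and $D_{r_k}\geqslant1$.
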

\begin{proof}
	On the one hand, from
	\begin{equation*}
		\sum_{k \in \ZZ/n\ZZ} \tfrac{c \sigma_k}{D_{s_k}}=n-\mathfrak{D}_{S}
		\quad\text{and}\quad
		\sum_{k \in \ZZ/n\ZZ} \tfrac{c \rho_k}{D_{r_k}}=n-\mathfrak{D}_{R}
	\end{equation*}
	we deduce that
	\begin{align*}
		\tilde{A}_j
		&=\sum_{k \in \ZZ/n\ZZ} \tfrac{c \sigma_k}{D_{s_k}}
		- 
		\sum_{k \in \ZZ/n\ZZ} \tfrac{c\rho_k}{D_{r_k}} + \tfrac{c\rho_j}{D_{r_j}} \\
		&=\tfrac{c\rho_j}{D_{r_j}}+\mathfrak{D}_{R}-\mathfrak{D}_{S},
	\end{align*}
	hence by Corollaries \ref{DRDS} and \ref{DRDRDSDS},
	\begin{align*}
		\tilde{A}_j
		&=\tfrac{c \rho_j}{D_{r_j}}+\Big(1-\tfrac{D_R}{D_S}\Big)\mathfrak{D}_{R} \\
		&=\tfrac{c \rho_j}{D_{r_j}}+\tfrac{c^n\mathfrak{S}}{D_S}\mathfrak{D}_{R}.
	\end{align*}
	On the other hand,
	\begin{align*}
		D_{r_j} \tfrac{D_S}{D_R}-1
		&=c \rho_j \tfrac{D_S}{D_R}+\tfrac{D_S}{D_R}-1 \\
		&=c \rho_j \tfrac{D_S}{D_R}+\tfrac{c^{\vsf n} \mathfrak{S}}{D_R}.
	\end{align*}
	Now using again Corollaries \ref{DRDS} and \ref{DRDRDSDS}, we obtain 
	\begin{align*}
		D_{r_j}\tfrac{D_S}{D_R}-1
		&=c \rho_j \tfrac{D_S}{D_R}+\tfrac{D_S}{D_R}-1 \\
		&=c \rho_j \tfrac{D_S}{D_R}+\tfrac{c^{n} \mathfrak{S}}{D_R},
	\end{align*}
	and 
	\begin{align*}
		\tilde{B}_j
		&=c \rho_j \tfrac{D_S \mathfrak{D}_{S}}{D_R} 
		+c^{n}\mathfrak{S}\tfrac{\mathfrak{D}_{S}}{D_R} \\
		&=c \rho_j \mathfrak{D}_{R}
		+\tfrac{c^{n} \mathfrak{S}}{D_S} \mathfrak{D}_{R}.
	\end{align*}
	Finally, since
	\[
		D_{r_0} =1 \quad\text{and}\quad
		D_{r_k} \geqslant 1 \quad \text{for all } k \in \{1, 2, \ldots, n-1\},
	\]
	we have $1 \leqslant \mathfrak{D}_R \leqslant n$ and the lemma follows.
\end{proof}

\begin{corollary}
	We have
	\begin{enumerate}[label=\rm (\alph*), ref=\alph*]
		\item $\Lambda_{r_j} \geqslant 0$, if $0 < c \leqslant \tfrac 1{2 (1 + \kappamax) n}$,
		\item $\Lambda_{r_j} \leqslant 0$, if $c \geqslant \tfrac1\kappamin$.
	\end{enumerate}
\end{corollary}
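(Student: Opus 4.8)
The plan is to substitute the closed forms of $\tilde{A}_j$ and $\tilde{B}_j$ provided by Lemma~\ref{LemmaTilde} into the two-sided estimate
\[
\rho_j-\tilde{A}_j-\kappamax\tilde{B}_j\leqslant\Lambda_{r_j}\leqslant\rho_j-\kappamin\tilde{B}_j,
\]
and in each case to factor out $\rho_j$ so that the sign of $\Lambda_{r_j}$ is governed by an explicit affine function of $c$. The threshold values of $c$ then drop out by inspection.

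For part~(b) only a lower bound on $\tilde{B}_j$ is needed. Since $\mathfrak{D}_R\geqslant 1$ and the summand $\tfrac{c^n\mathfrak{S}}{D_S}\mathfrak{D}_R$ is non-negative (all $\sigma_k\geqslant 0$ and $c>0$), Lemma~\ref{LemmaTilde} gives $\tilde{B}_j\geqslant c\rho_j\mathfrak{D}_R\geqslant c\rho_j$, whence $\Lambda_{r_j}\leqslant\rho_j(1-\kappamin c)$. As $\rho_j\geqslant 0$, this is non-positive precisely when $c\geqslant\tfrac1\kappamin$. This is the easy half.

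For part~(a) I would instead bound $\tilde{A}_j$ and $\tilde{B}_j$ from above by multiples of $c\rho_j$. Using $D_{r_j}\geqslant 1$ and $\mathfrak{D}_R\leqslant n$, the only term in Lemma~\ref{LemmaTilde} not manifestly controlled by $c\rho_j$ is $\tfrac{c^n\mathfrak{S}}{D_S}$, and the crux of the argument is the inequality
\[
\frac{c^n\mathfrak{S}}{D_S}=\prod_{k\in\ZZ/n\ZZ}\frac{c\sigma_k}{D_{s_k}}\leqslant c\rho_j.
\]
Here each factor $\tfrac{c\sigma_k}{D_{s_k}}$ lies in $[0,1)$; invoking Lemma~\ref{LemmaRhojGeSigmak} to select an index $k$ with $\rho_j\geqslant\sigma_k$, and bounding all the other factors by $1$, yields $\tfrac{c^n\mathfrak{S}}{D_S}\leqslant\tfrac{c\sigma_k}{D_{s_k}}\leqslant c\sigma_k\leqslant c\rho_j$. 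Granting this, Lemma~\ref{LemmaTilde} gives $\tilde{A}_j\leqslant(1+n)c\rho_j$ and $\tilde{B}_j\leqslant 2nc\rho_j$, so that
\[
\Lambda_{r_j}\geqslant\rho_j\bigl(1-(1+n)c-2n\kappamax c\bigr).
\]
Since $1+n+2n\kappamax\leqslant 2n(1+\kappamax)$ for $n\geqslant 1$, the bracket is non-negative as soon as $c\leqslant\tfrac1{2n(1+\kappamax)}$, which is exactly the stated range. The main obstacle is the displayed product inequality: once Lemma~\ref{LemmaRhojGeSigmak} is used to pair $\rho_j$ with a suitable $\sigma_k$, the rest reduces to the elementary estimates $1\leqslant\mathfrak{D}_R\leqslant n$ and $D_{r_j}\geqslant 1$.
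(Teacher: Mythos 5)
Your proof is correct and follows essentially the same route as the paper: both parts start from the two-sided estimate $\rho_j-\tilde{A}_j-\kappamax\tilde{B}_j\leqslant\Lambda_{r_j}\leqslant\rho_j-\kappamin\tilde{B}_j$, substitute the closed forms of Lemma \ref{LemmaTilde} together with $1\leqslant\mathfrak{D}_R\leqslant n$ and $D_{r_j}\geqslant1$, and for part (a) invoke Lemma \ref{LemmaRhojGeSigmak} together with the key bound $\tfrac{c^n\mathfrak{S}}{D_S}=\prod_{k}\tfrac{c\sigma_k}{D_{s_k}}\leqslant c\sigma_k$. The only difference is organizational: the paper keeps the selected $\sigma_k$ separate, writing the lower bound as $(\rho_j-\sigma_k)\bigl\{1-c(1+\kappamax n)\bigr\}+\sigma_k\bigl\{1-2c(1+\kappamax)n\bigr\}$, whereas you push the product bound one step further to $c\rho_j$ and factor $\rho_j$ out entirely; both yield the same thresholds, and your explicit statement of the product inequality actually makes transparent a step the paper leaves implicit.
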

\begin{proof}
	On the one hand,
	\[
		\Lambda_{r_j} \leqslant \rho_j \big\{1 - c \kappamin \big\},
	\]
	which is non-positive if $c \geqslant \tfrac1\kappamin$. On the other hand, according to Lemma \ref{LemmaRhojGeSigmak},
	there is $k \in \ZZ/n\ZZ$ such that $\rho_j \geqslant \sigma_k$, and so
	\begin{align*}
		\Lambda_{r_j}
		&\geqslant 
		\rho_j \big\{1-c(1+\kappamax n)\big\}
		-(1+\kappamax) n \tfrac{c^n \mathfrak{S}}{D_S}\\
		&=(\rho_j-\sigma_k) \big\{1-c(1+\kappamax n)\big\}
		+\sigma_k \big\{1-2 c (1+\kappamax) n \big\},
	\end{align*}
	which is non-negative provided that $0 < c \leqslant \tfrac1{2(1+\kappamax)n}$.
\end{proof}

In conclusion, we obtain the following global upper and lower bound for the Dunkl kernel.
\begin{theorem}
	\label{thm:1}
	For every $w \in W$ and $x, y \in \closedchamber$,
	\begin{align*}
		E(x,w.y) & 
		\approx
		e^{\sprod{x}{y}}
		\Big( 
		\prod_{\alpha \in R^+} (1 + \sprod{\alpha}{x} \sprod{\alpha}{y})^{-\kappa(\alpha)}\Big) \\ 
		&\times
		\begin{cases}
			1 
			& \text{if $w=\Id$,}\\[1.25ex]
			\frac1{1+\sprod{\alpha_0}{x} \sprod{\alpha_0}{y}}
			& \text{if $w = s_0$,}\\[1.25ex]
			\frac1{1+\sprod{\alpha_{n-1}}{x} \sprod{\alpha_{n-1}}{y}} 
			&\text{if $w = s_1$,}\\[1.25ex]
			\frac1{1+\sprod{x}{y}}  
			&\text{if $w = s_j$ with $1< j < n$,}\\[1.25ex]
			\frac{1+\sprod{x}{\tilde{r}.y}} 
			{(1+\sprod{\alpha_0}{x}\sprod{\alpha_0}{y})(1 + \sprod{\alpha_{n-1}}{x}\sprod{\alpha_{n-1}}{y})
			(1+\sprod{x}{y})} 
 			&\text{if $w = r_1$},\\[1.25ex]
			\frac{1+\sprod{\tilde{r}.x}{y}}
			{(1+\sprod{\alpha_0}{x} \sprod{\alpha_0}{y})
			(1 + \sprod{\alpha_{n-1}}{x} \sprod{\alpha_{n-1}}{y})
			(1 + \sprod{x}{y})}
			&\text{if $w = r_{n-1}$,}\\[1.25ex]
			\frac{(1+\sprod{\tilde{r}.x}{y})(1+\sprod{x}{\tilde{r}.y})}
			{(1 + \sprod{\alpha_0}{x} \sprod{\alpha_0}{y})
			(1+\sprod{\alpha_{n-1}}{x} \sprod{\alpha_{n-1}}{y})
			(1+\sprod{x}{y})^2}
			&\text{if $w = r_j$ with $1 < j < n - 1$,}
		\end{cases}
	\end{align*}
	where $\tilde{r}$ is defined in \eqref{eq:8}.
\end{theorem}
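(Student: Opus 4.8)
The plan is to feed the barrier functions $Q_w$ from \eqref{DihedralQ} into the general reduction of Section \ref{OverallStrategy}. Once the sign conditions \eqref{eq:2} and \eqref{eq:3} are verified for this choice of $Q_w$, the envelope functions $M(t)=\max_{w}\tilde{E}_w(tx,y)$ and $m(t)=\min_{w}\tilde{E}_w(tx,y)$ are, respectively, non-increasing (for small $c$) and non-decreasing (for large $c$), so that $\tilde{E}_w(x,y)\leqslant M(0)=1$ and $\tilde{E}_w(x,y)\geqslant m(0)=1$. The signs of $\Lambda_{s_j}$ and $\Lambda_{r_j}$ are exactly the content of the two preceding corollaries, which produce explicit thresholds; I would set $c_-$ to be the smaller of the two small thresholds and $c_+$ the larger of the two large ones. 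For $w=\Id$ one has $Q_\Id\equiv1$ and $\sprod{x}{y-\Id.y}=0$, so \eqref{Lambda_w} collapses to a sum of summands $\frac{1}{1+c\sprod{\alpha}{x}\sprod{\alpha}{y}}-\frac{1}{1+2c\sprod{\alpha}{x}\sprod{\alpha}{y}}\geqslant0$ (since $Q_{s_\alpha}=1+2c\sprod{\alpha}{x}\sprod{\alpha}{y}$), whence $\Lambda_\Id\geqslant0$; this already covers the identity in the upper bound.

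Granting the sign conditions, the decreasing-envelope run at $c=c_-$ gives $\tilde{E}_w(x,y)\leqslant1$ and the increasing-envelope run at $c=c_+$ gives $\tilde{E}_w(x,y)\geqslant1$. Unwinding the definition of $\tilde{E}_w$ in each case, and using that $1+c_\pm\sprod{\alpha}{x}\sprod{\alpha}{y}\approx1+\sprod{\alpha}{x}\sprod{\alpha}{y}$ because $c_-,c_+$ depend only on $n$ and $\kappa$, yields the two-sided comparison
\[
E(x,w.y)\approx e^{\sprod{x}{y}}\,Q_w(x,y)^{-1}\prod_{\alpha\in R^+}\bigl(1+\sprod{\alpha}{x}\sprod{\alpha}{y}\bigr)^{-\kappa(\alpha)},
\]
with $Q_w$ evaluated at $c=1$, say.

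The remaining, and most laborious, step is to match $Q_w^{-1}$ against the explicit rational functions in the statement, for which I would invoke Lemma \ref{LemmaRhojSigmajInnerProduct} to size each factor: $\sigma_0\approx\sprod{\alpha_0}{x}\sprod{\alpha_0}{y}$, $\sigma_1\approx\sprod{\alpha_{n-1}}{x}\sprod{\alpha_{n-1}}{y}$, and $\sigma_j\approx\sprod{x}{y}$ for $2\leqslant j\leqslant n-1$, while $\rho_1\approx\sprod{\tilde{r}.x}{y}$, $\rho_{n-1}\approx\sprod{x}{\tilde{r}.y}$, $\rho_0=0$, and $\rho_j\approx\sprod{x}{y}$ for $2\leqslant j\leqslant n-2$. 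For $w=s_j$ this immediately turns $D_{s_j}^{-1}$ into the three boundary/interior cases. For $w=r_j$ one writes $Q_{r_j}^{-1}=D_R/(D_{r_j}D_S)$ and counts powers of $1+\sprod{x}{y}$: the product $D_S$ contributes $(1+\sprod{\alpha_0}{x}\sprod{\alpha_0}{y})(1+\sprod{\alpha_{n-1}}{x}\sprod{\alpha_{n-1}}{y})(1+\sprod{x}{y})^{n-2}$ and $D_R$ contributes $(1+\sprod{\tilde{r}.x}{y})(1+\sprod{x}{\tilde{r}.y})(1+\sprod{x}{y})^{n-3}$ (here $D_{r_0}=1$ is trivial); dividing, and cancelling $D_{r_j}$ against whichever of $\rho_1,\rho_{n-1},\sprod{x}{y}$ it matches, produces precisely the $r_1$, $r_{n-1}$ and interior $r_j$ expressions.

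The main obstacle I anticipate is the identity element in the lower bound. Since $\Lambda_\Id\geqslant0$ rather than $\leqslant0$, the minimum-envelope argument is not automatically valid when $\Id$ realizes $m(t)$, and one must argue separately that $\tilde{E}_\Id\geqslant1$ for large $c$. I would try to show that $\Id$ never strictly realizes the minimum --- equivalently that $\tilde{E}_\Id$ is dominated by its neighbours $\tilde{E}_{s_{-k}}$ by a factor tending to $2$ as $c\to\infty$, which is exactly what offsets the surviving $-\Lambda_\Id\tilde{E}_\Id$ term at an interior minimum --- or, failing a direct bootstrap on the threshold $c_+$, to deduce the identity's lower bound from the $W$-invariant (Bessel) kernel $\lvert W\rvert^{-1}\sum_{w}E(x,w.y)$ together with the domination $E(x,y)=\max_{w}E(x,w.y)$. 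Everything else reduces to the bookkeeping of Lemma \ref{LemmaRhojSigmajInnerProduct} and Corollary \ref{DRDS}.
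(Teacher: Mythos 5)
Your proposal takes exactly the paper's route: the max/min envelope argument of Section \ref{OverallStrategy} with the barriers \eqref{DihedralQ}, the two sign corollaries for $\Lambda_{s_j}$ and $\Lambda_{r_j}$ (with $c_-$ the smaller of the two small thresholds and $c_+$ the larger of the two large ones), followed by the translation of $Q_w^{-1}$ into the explicit rational functions of the statement via Lemma \ref{LemmaRhojSigmajInnerProduct}. Your power count for $D_S$ and $D_R$ and the cancellation of $D_{r_j}$ against $1+\rho_j$ is precisely the content of the paper's proof, and you carry it out correctly.

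The one place where you depart from the paper is the ``obstacle'' at $w=\Id$ in the lower bound, and there you have in fact detected a normalization slip in the paper rather than a genuine difficulty. Computed literally from Section \ref{OverallStrategy}, where $\tilde{E}_w$ carries the factors $(1+c\sprod{\alpha}{x}\sprod{\alpha}{y})^{\kappa(\alpha)}$ while $Q_{s_\alpha}=1+2c\sprod{\alpha}{x}\sprod{\alpha}{y}$, one indeed gets $\Lambda_\Id>0$ in the open chamber, which blocks the minimum-envelope argument exactly as you say. But all the formulas of Section \ref{sec:3} (for instance the displayed $\Lambda_{s_j}$, in which the term coming from the root of $s_j$ is $-\kappa(\alpha_j)\bigl\{D_{s_j}-\tfrac1{D_{s_j}}\bigr\}$ and the remaining terms involve $\tfrac1{D_{s_k}}$ rather than $\tfrac1{1+c\sprod{\alpha_k}{x}\sprod{\alpha_k}{y}}$) are computed with the deformation factors $\prod_{\alpha\in R^+}\bigl(1+c\sprod{x}{y-s_\alpha.y}\bigr)^{\kappa(\alpha)}$, i.e.\ with $2\sprod{\alpha}{x}\sprod{\alpha}{y}$ in place of $\sprod{\alpha}{x}\sprod{\alpha}{y}$. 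With this (clearly intended) normalization, each summand of $\Lambda_\Id$ is $\tfrac1{Q_{s_\alpha}}-\tfrac{Q_\Id}{Q_{s_\alpha}}=0$, so $\Lambda_\Id\equiv0$ --- the paper's line ``$\Lambda_{\Id}\equiv1$'' should be read as ``$\equiv 0$'' --- and both \eqref{eq:2} and \eqref{eq:3} hold, non-strictly, at $w=\Id$; the monotonicity of $m(t)$ then requires no separate treatment of the identity. Since $1+2u\approx 1+u$ uniformly in $u\geqslant 0$, this renormalization only changes the implicit constants in the theorem. Consequently your two fall-back suggestions are unnecessary; note moreover that the second one is circular (a sharp global lower bound for the $W$-invariant kernel is equivalent to what is being proved), and the first is not something the differential inequalities yield at an interior minimum. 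Once you adopt the corrected normalization from the start, your argument closes and coincides with the paper's proof.
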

\begin{proof}
	According to the choice \eqref{DihedralQ}, we have proved that
	\begin{align*}
		E(x,w.y) \approx e^{\sprod{x}{y}}
		\Big(\prod_{\alpha \in R^+} (1+\sprod{\alpha}{x} \sprod{\alpha}{y})^{-\kappa(\alpha)}\Big)
		&\times
		\begin{cases}
			1 & \text{if $w = \Id$,}\\[1.25ex]
			\frac1{1+\sigma_j}
			&\text{if $w = s_j$ with $j \in \ZZ/n\ZZ$,}\\[1.25ex]
			\frac{\prod_{\atop{0 < k < n}{k \neq j}} (1+\rho_k)}
			{\prod_{0 \leqslant k < n} (1 + \sigma_k)} 
			&\text{if $w = r_j$ with $j \in \ZZ/n\ZZ \smallsetminus \{0\}$,}
		\end{cases}
	\end{align*}
and the theorem is a consequence of Lemma \ref{LemmaRhojSigmajInnerProduct}.
\end{proof}

\begin{bibliography}{dunkl}
	\bibliographystyle{amsplain}
\end{bibliography}

\end{document}